\theoremstyle{plain}
\newtheorem{thm}{Theorem}[section]
\newtheorem{prop}[thm]{Proposition}
\newtheorem{lem}[thm]{Lemma}
\newtheorem{cor}[thm]{Corollary}
\newtheorem{conj}[thm]{Conjecture}
\theoremstyle{definition}
\newtheorem{rem}[thm]{Remark}
\newtheorem{defn}[thm]{Definition}
\newtheorem{eg}[thm]{Example}
\newtheorem{subtitle}[thm]{}
\newtheorem{ex}{Exercise}[section]
\numberwithin{equation}{section}
\def\ms{\medskip}
\def\ni{\noindent}
\def\ti{\tilde}
\def\R{\mathbb{R} }
\def\C{\mathbb{C}}
\def\Z{\mathbb{Z}}
\newcommand{\beg}{\begin{eg}}
\newcommand{\eeg}{\end{eg}}
\newcommand{\bthm}{\begin{thm}}
\newcommand{\ethm}{\end{thm}}
\newcommand{\bprop}{\begin{prop}}
\newcommand{\eprop}{\end{prop}}
\newcommand{\bcor}{\begin{cor}}
\newcommand{\ecor}{\end{cor}}
\newcommand{\blem}{\begin{lem}}
\newcommand{\elem}{\end{lem}}
\newcommand{\bca}{\begin{cases}}
\newcommand{\eca}{\end{cases}}
\newcommand{\brem}{\begin{rem}}
\newcommand{\erem}{\end{rem}}
\newcommand{\bconj}{\begin{conj}}
\newcommand{\econj}{\end{conj}}
\newcommand{\bpm}{\begin{pmatrix}}
\newcommand{\epm}{\end{pmatrix}}
\newcommand{\bbm}{\begin{bmatrix}}
\newcommand{\ebm}{\end{bmatrix}}
\newcommand{\bvm}{\begin{vmatrix}}
\newcommand{\evm}{\end{vmatrix}}
\newcommand{\bdefn}{\begin{defn}}
\newcommand{\edefn}{\end{defn}}
\newcommand{\bsub}{\begin{subtitle}}
\newcommand{\esub}{\end{subtitle}}
\newcommand{\bex}{\begin{ex}}
\newcommand{\eex}{\end{ex}}
\newcommand{\ben}{\begin{enumerate}}
\newcommand{\een}{\end{enumerate}}
\begin{document}

\title[Geometric automorphism groups of  symplectic
$4$-manifolds]
{Geometric automorphism groups of  symplectic
$4$-manifolds}
\author{Bo Dai}
\address{LMAM, School of Mathematical Sciences \\ Peking
University \\ Beijing 100871, P. R. China}
\email{daibo@math.pku.edu.cn}
\author{Chung-I Ho}
\address{National Center for Theoretical Sciences\\
Math. Division\\ Hsinchu, Taiwan}
\email{ciho@math.cts.nthu.edu.tw}
\author{Tian-Jun Li}
\address{Department of Mathematics \\
University of Minnesota\\ Minneapolis, MN 55455}
\email{tjli@math.umn.edu}

\ms

\begin{abstract} Let $M$ be a closed, oriented, smooth $4-$manifold with intersection form
$\Gamma$, $A(\Gamma)$  the automorphism group of $\Gamma$ and
$D(M)$  the subgroup induced by orientation-preserving
diffeomorphisms of $M$.  In this note  we study the question when
 $D(M)$ is of infinite index in $A(\Gamma)$ for a  symplectic
4--manifold.
\end{abstract}

\maketitle
\tableofcontents

\section{Introduction}
For a given unimodular symmetric bilinear form $\Gamma$, let
$A(\Gamma)$ be its automorphism group. Given a closed, oriented,
topological $4-$manifold $M$, let $\Lambda_M$ be  the free abelian group obtained from $H^2(M;\mathbb Z)$ by modulo torsion, and 
$\Gamma_M$  the associated
unimodular symmetric bilinear form, namely, the intersection form
on $\Lambda_M$. By a celebrated result of
Freedman, any  unimodular symmetric bilinear form   is realized as
the intersection form of an oriented, simply connected topological
$4-$manifold.   Moreover, for such a topological manifold $M$, the
natural map from the group of orientation-preserving
homeomorphisms to $A(\Gamma_M)$ is surjective.

For a smooth, closed, oriented $4-$manifold $M$ with intersection
form $\Gamma$, there is a natural map from the group of
orientation-preserving diffeomorphisms Diff$^+(M)$ to the
automorphism group of  $\Gamma$, $A(\Gamma)$. Let $D(M)$ be the
image of this natural map. In other words, an automorphism is in
$D(M)$ if it is realized by an orientation-preserving
diffeomorphism.  $D(M)$ is  called the geometric automorphism
group. The group $D(M)$, both as an abstract group and as a
subgroup of $A(\Gamma)$, is a powerful smooth invariant, which is
nonetheless  hard to compute in general.

Wall initiated  the comparison of $D(M)$ and $A(\Gamma)$ in a
series of papers \cite{wall-uni}, \cite{wall-ii}, \cite{W64}. In
particular, he proved in \cite{W64} the following beautiful
result: for any simply connected smooth manifold with $\Gamma$
strongly indefinite or of rank at most $10$, if there is  an
$S^2\times S^2$ summand in its connected sum decomposition, then
$D(M)=A(\Gamma)$. For K\"ahler surfaces, especially elliptic
surfaces, rational surfaces, ruled surfaces, we have a rather good
understanding of $D(M)$ due to Friedman, Morgan, Donaldson, L\"onne
\cite{FM88}, \cite{FM97}, \cite{Do90},  \cite{lo} (see also \cite{LL03},
\cite{LL05}).

In this note we will focus on the  question when $D(M)$ is of
infinite index in $A(\Gamma_M)$ if $A(\Gamma_M)$ is an infinite
group. We first observe in Theorem  \ref{a} that $A(\Gamma)$ is
infinite  if $\Gamma$ is indefinite of rank at least 3. Moreover,
we offer a simple criterion for a subgroup to have infinite index.
We apply this criterion to symplectic manifolds and obtain an
almost complete answer.

To state our result, let us first recall some definitions.
For a smooth $4-$manifold $M$ with
a symplectic form $\omega$, let $K_{\omega}$ denote the symplectic
canonical class. A symplectic $4-$manifold   is said to be minimal
if it does not contain any embedded symplectic sphere with
self-intersection $-1$.  A general symplectic $4-$manifold $(M,
\omega)$ can be symplectically blown down to a minimal one, which
is called a minimal model.

The Kodaira dimension of a
symplectic $4-$manifold $(M, \omega)$ is defined below.

\begin{defn}
If $(M, \omega)$ is minimal,  the Kodaira dimension of $(M,\omega)$
 is defined in the following way:

\[ \kappa(M,\omega)=\left\{ \begin{array}{ll}
-\infty &\hbox{if $K_{\omega}\cdot [\omega]<0$ or $K_{\omega}\cdot K_{\omega}<0$},\\
0&\hbox{ if $K_{\omega}\cdot [\omega]=0$ and $K_{\omega}\cdot K_{\omega}=0$},\\
1&\hbox{ if $K_{\omega}\cdot [\omega]> 0$ and $K_{\omega}\cdot K_{\omega}=0$},\\
2&\hbox{ if $K_{\omega}\cdot [\omega]>0$ and $K_{\omega}\cdot K_{\omega}>0$}.\\
\end{array}
\right. \]

 For a general $(M, \omega)$, $\kappa(M,\omega)$ is
defined to be that of any of its minimal model.
\end{defn}

It is shown in \cite{L06} that $\kappa(M,\omega)$ is well-defined
and agrees with the holomorphic Kodaira dimension if $(M, \omega)$
is K\"ahler. Moreover, it turns out that $\kappa(M,\omega)$ only
depends on $M$ so we will denote it by $\kappa(M)$.

\begin{thm}\label{main}
Suppose $M$ has symplectic structures and $A(\Gamma_M)$ is
infinite. Then $D(M)$ is of infinite index if
\begin{itemize}

\item $\kappa(M) =-\infty$, and $M=\mathbb C \mathbb P^2\#
n\overline{\mathbb C \mathbb P}^2$ with $  n\geq 10$ or
$(\Sigma\times S^2)\# n\overline{\mathbb C \mathbb P}^2$ with
$n\geq 1$, where $\Sigma$ is a closed Riemann surface of positive
genus.

\item  $\kappa(M)=0$ and $\Gamma_M$ is odd.

\item  $\kappa(M)\geq 1$.

\end{itemize}
\end{thm}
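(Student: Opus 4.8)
The plan is to deduce all three cases from a single lattice-theoretic criterion: if $\Gamma$ is indefinite of rank at least $3$ and a subgroup $G \le A(\Gamma)$ preserves a nonempty finite set $S \subset \Lambda_M$ of classes that are not all zero, then $[A(\Gamma):G] = \infty$. Granting this, it suffices in each case to produce, from symplectic and Seiberg--Witten input, a $D(M)$--invariant finite set of classes containing a nonzero vector. Observe first that, since $\Gamma_M$ is unimodular, the hypothesis that $A(\Gamma_M)$ is infinite already forces $\Gamma_M$ to be indefinite of rank at least $3$: definite forms have finite automorphism group, and the only unimodular indefinite forms of rank $2$, namely $H$ and $\langle 1\rangle\oplus\langle -1\rangle$, also have finite automorphism group. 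Moreover every manifold listed has rank at least $3$, so the criterion applies directly.

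To prove the criterion, realize $A(\Gamma)$ as an arithmetic subgroup of $O(p,q)$ with $p,q\ge 1$ and $p+q\ge 3$; it is a lattice in this noncompact real Lie group. For a single nonzero $v$, the real stabilizer $\mathrm{Stab}_{O(p,q)}(v)$ is a proper algebraic subgroup (isomorphic to $O(p-1,q)$ or $O(p,q-1)$ when $v^2\ne 0$, and a proper parabolic-type subgroup when $v^2=0$) of strictly smaller dimension. By the Borel density theorem any finite-index subgroup of $A(\Gamma)$ is Zariski dense, hence cannot lie in $\mathrm{Stab}(v)$; therefore $\mathrm{Stab}_{A(\Gamma)}(v)$ has infinite index and the orbit $A(\Gamma)\cdot v$ is infinite. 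For a finite nonzero set $S$, choose a nonzero $v\in S$: the subgroup of $\mathrm{Stab}(S)$ fixing $v$ has finite index in $\mathrm{Stab}(S)$ but infinite index in $A(\Gamma)$, and since $G\subseteq\mathrm{Stab}(S)$ the claim follows.

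It remains to exhibit the invariant set. For $\kappa(M)\ge 1$ the canonical class $K_\omega$ is nonzero (as $K_\omega\cdot[\omega]>0$) and, by Taubes' theorem, $\pm K_\omega$ are Seiberg--Witten basic classes; since the finite set of basic classes is an orientation-preserving diffeomorphism invariant, $D(M)$ preserves it and the criterion applies. For $\kappa(M)=0$ with $\Gamma_M$ odd, the classification of minimal symplectic models with $\kappa=0$ shows minimality would force an even form, so $M=N\#k\,\overline{\mathbb C\mathbb P}^2$ with $k\ge 1$ and $K_N=0$ numerically; the basic classes are then the nonzero finite set $\{\pm E_1\pm\cdots\pm E_k\}$ of blow-up classes, again $D(M)$--invariant (the $b^+=1$ subcases, such as blow-ups of Enriques surfaces, being handled by the $b^+=1$ wall-crossing refinement discussed below). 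For $\kappa(M)=-\infty$ with $M=(\Sigma\times S^2)\#n\,\overline{\mathbb C\mathbb P}^2$ and $g(\Sigma)\ge 1$, the fiber class $F$ (with $F^2=0$, $F\ne 0$) is, up to sign, distinguished by the basic classes of an irrational ruled surface and hence preserved by $D(M)$; here the positive genus of $\Sigma$ is essential, as it rigidifies the ruling.

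The main obstacle is the remaining rational case $M=\mathbb C\mathbb P^2\#n\,\overline{\mathbb C\mathbb P}^2$ with $n\ge 10$, where $b^+=1$ and the naive set of basic classes is chamber-dependent. Here the plan is to use the $b^+=1$ wall-crossing theory of Seiberg--Witten invariants together with Taubes' nonvanishing, in the form developed by Li--Liu, to show that $\pm K_\omega$, with $K_\omega=-3H+\sum_{i=1}^{n}E_i$ and $K_\omega^2=9-n$, remains an orientation-preserving diffeomorphism invariant up to sign. Since $n\ge 10$ gives $K_\omega^2=9-n<0$ and rank $n+1\ge 11$, the nonzero class $K_\omega$ feeds into the criterion and yields infinite index. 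Establishing this diffeomorphism-invariance in the $b^+=1$ setting --- and thereby explaining why the threshold falls at $n\ge 10$, in contrast to the del Pezzo and elliptic range $n\le 9$ where these surfaces realize a far larger portion of $A(\Gamma)$ --- is the delicate step; the lattice criterion then converts it mechanically into the stated conclusion.
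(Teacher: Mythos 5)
Your lattice-theoretic criterion is correct, and its proof via Borel density is a genuine alternative to the paper's argument: the paper proves the corresponding statement (the second bullet of Theorem \ref{a}) by hand, using Wall's transitivity results for strongly indefinite forms (Proposition \ref{transitive}) together with an explicit reflection argument producing infinite orbits in the nearly definite case (Lemma \ref{key-lemma}); your version is in fact slightly more general, since it applies to any finite invariant set of not-all-zero classes rather than only characteristic ones. Your treatment of $\kappa(M)\geq 1$ and of $\kappa(M)=0$ with $\Gamma_M$ odd likewise parallels the paper, which uses the finite, nonzero, $D(M)$-invariant set $\overline{\mathcal K}_M$ of symplectic canonical classes (finiteness by Taubes when $b^+\geq 2$ and by Li--Liu when $b^+=1$, see Theorem \ref{canonical classes}); your substitute ``set of basic classes'' needs exactly this Li--Liu input in the $b^+=1$ subcases, which you defer but which is citable. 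The irrational ruled case also matches the paper's alternative argument (invariance of $\pm f$ quoted from Friedman--Morgan, then an infinite orbit of $f$ via Lemma \ref{key-lemma}).

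The genuine gap is the rational case $M=\mathbb C\mathbb P^2\#\,n\overline{\mathbb C\mathbb P}^2$, $n\geq 10$, and it is not merely that you left the ``delicate step'' open: that step is false. No wall-crossing refinement can make $\{\pm K_\omega\}$ invariant under $D(M)$, because the reflection in $E_1$ is realized by an orientation-preserving diffeomorphism (complex conjugation on the $\overline{\mathbb C\mathbb P}^2$ summand, extended by the identity), and it sends $K_\omega=-3H+E_1+\cdots+E_n$ to $-3H-E_1+E_2+\cdots+E_n\neq\pm K_\omega$. This is precisely why the finiteness statement for $\overline{\mathcal K}_M$ in Theorem \ref{canonical classes} carries the hypothesis $\kappa(M)\geq 0$: for rational surfaces the set of symplectic canonical classes is infinite. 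As a sanity check, the reflection above exists for every $n\geq 1$, so if your claimed invariance held it would hold equally for $2\leq n\leq 9$, and your criterion would then contradict Wall's theorem that $D(M)=A(\Gamma)$ in that range; the true source of the threshold $n\geq 10$ lies elsewhere. The paper does not attempt a canonical-class argument here; it quotes Friedman--Morgan \cite{FM88} and Li--Li \cite{LL05}, whose infinite-index proofs go through super $P$-cells, respectively an explicit finite generating set for $D(M)$, not through a finite invariant set of cohomology classes. To complete your proof you would have to do the same (or reprove their theorem), since your unified criterion has no purchase on this case.
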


This result follows from Propositions \ref{-infty}, \ref{ab},  \ref{odd}.

$M$ is called a symplectic Calabi-Yau surface if there is a
symplectic form $\omega$ on $M$ such that $K_{\omega}$ vanishes in
the real cohomology. The third author showed in \cite{L06} that $M$ 
is a symplectic Calabi-Yau surface exactly when $\kappa(M)=0$ and 
$\Gamma_M$ is even. With  this understood,  Theorem \ref{main} can 
be restated as: When $M$ is
symplectic and $A(\Gamma_M)$ is infinite, $D(M)$ is of finite
index only when $M$ is a  symplectic Calabi-Yau surface, or
$\mathbb C \mathbb P^2\# n\overline{\mathbb C \mathbb P}^2$ with $
2\leq n\leq 9$.

We define K\"ahler Calabi-Yau surfaces in the same way. There are
three  K\"ahler Calabi-Yau surfaces with infinite $A(\Gamma)$: K3
surface,  Enriques surface, $T^4$. All of them have finite index
geometric automorphism group. The only known non-K\"ahler
Calabi-Yau surfaces with infinite $A(\Gamma)$  are the so-called
Kodaira-Thurston manifolds.
 We will show in the last section that they have infinite
index geometric automorphism group. Thus
we further make the following conjecture.

\begin{conj} \label{kahler} Suppose $M$ has symplectic structures and $A(\Gamma_M)$ is infinite.
Then $D(M)$ is of finite index if and only if  $M$ is

 \begin{itemize}
\item  a K\"ahler Calabi-Yau surface, or

\item $\mathbb C \mathbb P^2\# n\overline{\mathbb C \mathbb P}^2$ with $ 2\leq n\leq 9$.

\end{itemize}
\end{conj}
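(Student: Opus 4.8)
\noindent Here is the approach I would take to prove Theorem~\ref{main}. The plan is to reduce all three cases to a single lattice-theoretic criterion---the ``simple criterion'' promised in the introduction---and then, for each value of $\kappa(M)$, to feed it a finite $D(M)$-invariant set of classes. Since $A(\Gamma_M)$ is infinite while a definite or rank-$2$ unimodular form has finite automorphism group, $\Gamma_M$ must be indefinite of rank at least $3$, so Theorem~\ref{a} and its setup apply. The criterion I would isolate is: \emph{if $G\le A(\Gamma_M)$ preserves a nonempty finite set $S\subset\Lambda_M$ containing a nonzero class, then $[A(\Gamma_M):G]=\infty$.} To prove it, pass to the finite-index subgroup $G_0\le G$ fixing $S$ pointwise; then $G_0\subseteq\mathrm{Stab}(v)$ for some nonzero $v\in S$. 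For an indefinite unimodular lattice of rank $\ge 3$ the orthogonal group acts on the vectors of any fixed value with infinite orbits (indefinite forms represent each value infinitely often, and by Eichler's theorem the action is transitive on primitive vectors of a given norm and divisibility; equivalently, a finite-index subgroup is Zariski dense and cannot fix a nonzero vector). Hence $\mathrm{Stab}(v)$, and with it $G_0$ and $G$, has infinite index. It therefore suffices in each case to exhibit such a set $S$.

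The universal candidate for $S$ is the set $\mathcal K_M$ of symplectic canonical classes, since $\phi^\ast K_\omega=K_{\phi^\ast\omega}$ shows that $D(M)$ permutes $\mathcal K_M$. When $\kappa(M)\ge 1$, Taubes' constraints confine $\mathcal K_M$ to the finite set of Seiberg--Witten basic classes, and $K_\omega\ne 0$ because $K_\omega\cdot[\omega]>0$; applying the criterion to $\{K_\omega,-K_\omega\}$ gives infinite index. When $\kappa(M)=0$ and $\Gamma_M$ is odd, $M$ is a nonminimal symplectic Calabi--Yau surface, so $M=N\#k\,\overline{\mathbb{CP}}^2$ with $N$ minimal Calabi--Yau and $k\ge 1$; since $K_N$ is torsion, in $\Lambda_M$ the canonical classes are the nonzero sign combinations $\pm e_1\pm\cdots\pm e_k$ coming from the blow-up formula for basic classes, a finite invariant set, and the criterion again applies. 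In the complementary even case $K_\omega=0$ in $\Lambda_M$ and the method yields nothing---exactly as it must, since those are precisely the Calabi--Yau surfaces excluded from the statement.

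There remains $\kappa(M)=-\infty$, where $\mathcal K_M$ is typically infinite and the canonical class is no longer rigid. For the irrational ruled manifolds $(\Sigma\times S^2)\#n\,\overline{\mathbb{CP}}^2$ with $g(\Sigma)\ge 1$ I would replace the canonical class by the ruling: because the base has positive genus, the ruling is unique up to diffeomorphism (McDuff), so the primitive isotropic fiber class $F$ is preserved up to sign by $D(M)$. Then $\{F,-F\}$ is a nonzero finite invariant set, and the criterion closes this subcase.

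The rational manifolds $\mathbb{CP}^2\#n\,\overline{\mathbb{CP}}^2$ are where I expect the real difficulty. Here the base is $S^2$, the ruling is not unique, so neither $F$ nor $K_\omega$ is individually preserved, and $\mathcal K_M$ is infinite; the finite-set criterion cannot be applied verbatim. The hard part will be to exploit the threshold in $K_\omega^2=9-n$: for $n\ge 10$ the class $K_\omega$ is negative and $K_\omega^{\perp}$ is indefinite of rank $n-1\ge 9$. My plan is to pin down $\mathcal K_M$ as a set from the computation of the symplectic cone of rational surfaces (Li--Liu), and then to show by a direct lattice computation that $\mathrm{Stab}_{A(\Gamma_M)}(\mathcal K_M)$ has infinite index precisely when $n\ge 10$---consistently with Wall's equality $D(M)=A(\Gamma_M)$ in the range $n\le 9$ quoted in the introduction. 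Controlling this stabilizer from a description of $\mathcal K_M$ itself, rather than merely from its $D(M)$-orbit, is the step I expect to carry the weight of the argument.
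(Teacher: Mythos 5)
There is a genuine gap, and it is structural: the statement in question is Conjecture \ref{kahler}, not Theorem \ref{main}, and your proposal is explicitly framed as (and only functions as) a proof sketch of Theorem \ref{main}. The conjecture is an equivalence, and your finite-invariant-set criterion---essentially the second bullet of Theorem \ref{a} fed with $\overline{\mathcal K}_M$ via Theorem \ref{canonical classes}---can at best yield the ``only if'' direction \emph{away from} symplectic Calabi--Yau surfaces, which is exactly the content of Theorem \ref{main}. It says nothing about the ``if'' direction: one must show that $D(M)$ has \emph{finite} index when $M$ is a K\"ahler CY surface with infinite $A(\Gamma_M)$, i.e.\ for the K3 surface, the Enriques surface, and $T^4$. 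In the paper this is Proposition \ref{kahler'}, and it rests on Donaldson's theorem (index $2$ for K3), L\"onne's theorem ($D=A$ for Enriques), and an explicit computation with $\Lambda^2\colon SL(4,\Z)\to A(3U)$ showing $[A(3U):D(T^4)]=4$; your argument does not touch any of this (the $2\le n\le 9$ rational case is Wall's theorem, which you quote only for consistency).

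More fundamentally, the remaining half of the ``only if'' direction---that every \emph{non}-K\"ahler symplectic CY surface with infinite $A(\Gamma_M)$ has infinite-index $D(M)$---is open, which is precisely why the paper states this as a conjecture rather than a theorem: the classification of symplectic CY surfaces is unknown (they are only known to be $\Z$-homology K3, $\Z$-homology Enriques, or $\mathbb Q$-homology $T^2$-bundles over $T^2$), so the manifolds to be treated cannot even be enumerated. Your criterion is structurally incapable of handling them: as you note yourself, $K_\omega$ vanishes in $\Lambda_M$ there, so no nonzero finite invariant set of canonical classes exists. You then assert these cases are ``excluded from the statement''---true for Theorem \ref{main}, false for the conjecture. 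The paper's own partial evidence here (Proposition \ref{KT}) uses a different mechanism your approach does not contain: for the Kodaira--Thurston manifolds, the image of the cup product $H^1\times H^1\to H^2$ is a $D(M)$-invariant full isotropic sublattice of rank $2$, and $A(2U)$ acts on such sublattices transitively with infinite orbit (Lemma \ref{transitive'}), forcing infinite index. Finally, even within Theorem \ref{main}, your treatment of $\mathbb C\mathbb P^2\# n\overline{\mathbb C\mathbb P}^2$ with $n\geq 10$ is a plan (``the step I expect to carry the weight'') rather than a proof; the paper disposes of this case by quoting Friedman--Morgan and Li--Li.
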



\section{Infinite $A(\Gamma)$}
\subsection{Quadratic forms}
Let $\Lambda$ be a  finitely generated free
abelian group, and  $\Gamma :\Lambda\times \Lambda\rightarrow \mathbb{Z}$  a
unimodular symmetric bilinear form on $\Lambda$. Sometimes we abbreviate $\Gamma(x, y)$ as $x\cdot y$.
It induces a quadratic form
$Q:\Lambda\rightarrow \mathbb{Z}$ as $Q(x)=\Gamma(x, x)$. $Q(x)$
is called the norm of $x$. $\Gamma$ is of even type if $Q(x)$ is
even for any vector $x \in \Lambda$. Otherwise, it is called of
odd type. The rank $r(\Gamma)$ of $\Gamma$ is the rank of
$\Lambda$. Let $b^+,b^-$ be the number of 1, -1 respectively, on a
diagonal matrix over $\mathbb{R}$ representing $\Gamma$. The
signature $\sigma(\Gamma)$ of $\Gamma$ is the difference
$b^+-b^-$.

\begin{defn} $\Gamma$
is called {\it definite, nearly definite\/} or {\it strongly
indefinite\/} if min$\{b^+,b^-\}=0,1$ or $\geq2$ respectively.
\end{defn}

The following classification is well known, see e.g. \cite{wall-uni}.
\begin{thm}\label{classification}
The classification of indefinite unimodular symmetric forms is given by their rank, signature and type.
\end{thm}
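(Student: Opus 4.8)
The plan is to prove the classification by induction on the rank $r(\Gamma)$, the engine being a splitting lemma that peels a rank-$2$ orthogonal summand off any indefinite unimodular form. Since the rank and signature together determine the pair $(b^+,b^-)$ via $b^{\pm}=\tfrac{1}{2}(r\pm\sigma)$, it suffices to produce for each type a normal form depending only on $(r,\sigma)$. The entire argument reduces to one arithmetic input: that an indefinite unimodular $\Gamma$ admits a primitive isotropic vector, i.e.\ a primitive $x\in\Lambda$ with $Q(x)=0$.

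First I would establish the splitting lemma. Given a primitive isotropic $x$, unimodularity of $\Gamma$ lets me choose $y\in\Lambda$ with $x\cdot y=1$; on the sublattice $\langle x,y\rangle$ the form has Gram matrix $\begin{pmatrix}0&1\\1&c\end{pmatrix}$ with $c=Q(y)$. If $\Gamma$ is even then $c$ is even, and replacing $y$ by $y-\tfrac{c}{2}x$ normalizes the summand to the hyperbolic plane $U=\begin{pmatrix}0&1\\1&0\end{pmatrix}$; if $\Gamma$ is odd, a similar adjustment produces the summand $\langle 1\rangle\oplus\langle-1\rangle$. In either case the rank-$2$ piece is unimodular, so it splits off as an orthogonal direct summand $\Gamma\cong U\oplus\Gamma'$ (respectively $\langle1\rangle\oplus\langle-1\rangle\oplus\Gamma'$), where $\Gamma'$ is unimodular of rank $r-2$, of the same type, and of the same signature.

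Then I would run the induction. In the odd case, repeated splitting yields $\Gamma\cong b^+\langle1\rangle\oplus b^-\langle-1\rangle$, manifestly determined by $(r,\sigma)$. In the even case the signature is forced to be divisible by $8$, and repeated splitting—while retaining at least one hyperbolic summand to preserve indefiniteness—reduces $\Gamma$ to a standard form consisting of $q=\tfrac{1}{2}(r-|\sigma|)$ hyperbolic summands $U$ together with $|\sigma|/8$ copies of $E_8$ (if $\sigma>0$) or $-E_8$ (if $\sigma<0$), again fixed by $(r,\sigma)$; the presence of the hyperbolic factor is exactly what rules out the subtle inequivalences (such as $E_8\oplus E_8$ versus $D_{16}^+$) that plague the definite case. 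The main obstacle is the arithmetic input flagged above: producing the isotropic vector. This is the Hasse–Minkowski local–global principle in action—$\Gamma\otimes\mathbb{Q}$ is isotropic over $\mathbb{R}$ by indefiniteness, and over each $\mathbb{Q}_p$ the unimodularity (determinant a $p$-adic unit) forces isotropy once the rank is at least $3$, with $p=2$ checked by hand—so $\Gamma\otimes\mathbb{Q}$ represents $0$, and clearing denominators and dividing by the gcd of coordinates yields a primitive integral isotropic vector.
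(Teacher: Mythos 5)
The paper itself does not prove Theorem~2.2: it quotes it as classical and cites Wall \cite{wall-uni} (the standard textbook proofs are in Serre's \emph{A Course in Arithmetic}, Ch.~V, and Milnor--Husemoller). Your strategy --- produce a primitive isotropic vector, split off a unimodular rank-$2$ summand, induct --- is indeed the skeleton of that classical proof, but as written it has two genuine gaps, located at exactly the two places where the classical argument is delicate.

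First, the arithmetic input. The claim that ``over each $\mathbb{Q}_p$ the unimodularity (determinant a $p$-adic unit) forces isotropy once the rank is at least $3$, with $p=2$ checked by hand'' is false at $p=2$: the forms $\langle 1,1,1\rangle$ and $\langle 1,1,1,1\rangle$ are unimodular of rank $\geq 3$ and anisotropic over $\mathbb{Q}_2$ (the second is the norm form of the $2$-adic quaternion division algebra, since $(-1,-1)_2=-1$). These are definite, so they do not contradict the theorem, but they show that no purely local check at $2$ of the kind you propose can succeed. For rank $\geq 5$ no unimodularity is needed at any finite prime (every $p$-adic form of rank $\geq 5$ is isotropic); for ranks $3$ and $4$ the indefiniteness must be imported into the prime $2$ globally: unimodularity gives Hasse invariant $\epsilon_p=1$ at all odd $p$, Hilbert reciprocity then forces $\epsilon_2=\epsilon_\infty$, and it is the real signature that makes the pair (discriminant, $\epsilon_2$) differ from the anisotropic one. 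That reciprocity step is missing from your argument.

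Second, the induction. Repeated splitting stalls the moment the orthogonal complement becomes definite (or changes type), and your appeal to ``retaining a hyperbolic summand'' at that point asserts rather than proves the theorem's hard content. Concretely: in the odd case, splitting $\langle 1\rangle\oplus\langle -1\rangle$ off $\langle 1\rangle\oplus\langle -1\rangle\oplus E_8\oplus E_8$ leaves the even definite form $E_8\oplus E_8$, to which no further splitting applies, and nothing in your argument shows this lattice is isomorphic to $17\langle 1\rangle\oplus\langle -1\rangle$; in the even case the needed statement is precisely $U\oplus E_8\oplus E_8\cong U\oplus D_{16}^{+}$, which is what the parenthetical about ``ruling out the subtle inequivalences'' presupposes. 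The odd case can be repaired by a padding trick --- induct not on the complement $S_1$ but on $\langle \pm 1\rangle\oplus S_1$, which is automatically odd and can be made indefinite by the choice of sign. The even case has no rank-one even padding; classically one reduces it to the odd case via characteristic vectors (in $S\oplus\langle 1\rangle$ the generator of $\langle 1\rangle$ is characteristic of norm $1$, and one needs transitivity of the automorphism group on such vectors, a result of Wall's), or one invokes Eichler's spinor genus theorem. Without one of these ideas the even half of the classification is not proved.
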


Let $U,E$ be respectively the hyperbolic lattice and the (positive definite) $E_8$
lattice. The list of indefinite unimodular symmetric forms are
$$m\langle 1 \rangle \oplus n\langle -1\rangle, \quad pU\oplus qE, \
 \text{where} \ m,n,p\in\mathbb{N},q\in \mathbb{Z}$$

\subsection{Infinite $A(\Gamma)$ and criterion for subgroups of infinite index}
Let $A(\Gamma)$ be the automorphism group of $\Gamma$.
In this subsection we establish  the following  criterion for subgroups of $A(\Gamma)$ to have infinite index.

\begin{thm}\label{a} 
Let $A(\Gamma)$ be the automorphism group of $\Gamma$.
\begin{itemize}
\item $A(\Gamma)$ is finite if and only if it is definite or indefinite
of rank $2$.

\item Suppose $A(\Gamma)$ is infinite, ie. $\Gamma$ is indefinite of rank $\geq 3$. If there are
finitely many nonzero characteristic classes invariant under a
subgroup $D$, then $D$ is of infinite index.

\end{itemize}
\end{thm}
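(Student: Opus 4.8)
The plan is to handle the two bullets with a single elementary device, the Eichler (isotropic) transvection, which simultaneously produces infinite automorphisms and controls fixed vectors of finite-index subgroups.

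For the first bullet I would dispose of the ``finite'' direction directly. If $\Gamma$ is definite, then $A(\Gamma)$ is a discrete subgroup of the compact group $O(r(\Gamma),\mathbb{R})$ of real isometries of a definite form, hence finite; concretely, an automorphism permutes the finitely many lattice vectors of each fixed norm. If $\Gamma$ is indefinite of rank $2$, then Theorem~\ref{classification} leaves only $U$ and $\langle 1\rangle\oplus\langle-1\rangle$, and in each case inspecting the primitive isotropic vectors (resp.\ the norm $\pm1$ vectors) shows that an automorphism is pinned down by its effect on a finite distinguished set, so $A(\Gamma)$ is finite (of order $4$).

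For the converse, suppose $\Gamma$ is indefinite of rank $\geq 3$. The key construction is: for a primitive isotropic vector $e$ (so $e\cdot e=0$) and a vector $u$ with $e\cdot u=0$, $u$ independent from $e$, and $u\cdot u$ even, the map
\[
T_{e,u}(x)=x+(x\cdot e)\,u-\Big[(x\cdot u)+\tfrac{1}{2}(u\cdot u)(x\cdot e)\Big]e
\]
is an integral isometry of $\Gamma$, and a short computation gives $T_{e,u}^{\,n}=T_{e,nu}$, so $\langle T_{e,u}\rangle$ is infinite. Such a pair exists because, by Theorem~\ref{classification}, an indefinite $\Gamma$ of rank $\geq3$ has a primitive isotropic $e$ whose orthogonal complement has rank $\geq2$, providing a companion $u$ (rescaled to even norm if necessary). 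Hence $A(\Gamma)$ is infinite.

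For the second bullet I would isolate and prove, with the same transvections, the \emph{core statement}: if $\Gamma$ is indefinite of rank $\geq3$ and $D_0\leq A(\Gamma)$ has finite index, then $D_0$ fixes no nonzero vector of $\Lambda$. Indeed, each $T_{e,u}$ has infinite order, so $T_{e,u}^{\,m}=T_{e,mu}\in D_0$ for some $m\geq1$; if $v\in\Lambda$ is fixed by $D_0$, then $T_{e,mu}(v)=v$, and comparing the coefficients of the independent vectors $u$ and $e$ forces $v\cdot e=0$ (and then $v\cdot u=0$). Letting $e$ range over all primitive isotropic vectors, which span $\Lambda\otimes\mathbb{Q}$ for indefinite $\Gamma$ of rank $\geq3$, we get $v\perp\Lambda$, whence $v=0$ by unimodularity. (Alternatively this is Borel density, since $D_0$ is then Zariski dense in $O(b^+,b^-)$ and the standard representation has no invariants, but I prefer the transvection argument to stay self-contained.) Granting this, the second bullet follows: the hypothesis furnishes a nonempty finite set $S$ of nonzero characteristic vectors invariant under $D$, the homomorphism $D\to\mathrm{Sym}(S)$ has finite-index kernel $D_0$ fixing every element of $S$ pointwise, and a nonzero $c\in S$ is a nonzero $D_0$-fixed vector; if $D$ had finite index then so would $D_0$, contradicting the core statement, so $D$ is of infinite index.

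I expect the main obstacle to be the core statement, and specifically the verification, case by case in the classification, that primitive isotropic vectors span $\Lambda\otimes\mathbb{Q}$ and that each admits an admissible companion $u$. In the odd case $m\langle1\rangle\oplus n\langle-1\rangle$ the vectors $x_i\pm y_j$ are isotropic and visibly span, and one must track the integrality condition ($u\cdot u$ even) when choosing companions; in the even case $pU\oplus qE$ with $p\geq1$ one reaches the $E$-summand using isotropic vectors of the form $e_1-\tfrac{1}{2}(w\cdot w)f_1+w$ for $w$ in the $E$-lattice. Pinning down these spanning statements is where the real care is needed.
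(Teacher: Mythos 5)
Your proposal is correct, but it takes a genuinely different route from the paper's. The paper splits the indefinite rank $\geq 3$ case in two: for strongly indefinite $\Gamma$ it cites Wall's transitivity theorem on primitive vectors of given norm and type (Proposition~\ref{transitive}(1)) and explicitly constructs infinitely many characteristic classes of a fixed norm (Proposition~\ref{transitive}(2)); for nearly definite $\Gamma$ it shows by hand, via reflections (Lemma~\ref{key-lemma}), that every nonzero vector has infinite $A(\Gamma)$-orbit. Both bullets are then deduced from this infinite-orbit statement: a finite-index subgroup preserving a finite set of nonzero characteristic classes would force such an orbit to be finite. Your core statement --- a finite-index subgroup of $A(\Gamma)$ fixes no nonzero vector --- is logically equivalent to the paper's infinite-orbit statement (the stabilizer of a vector with finite orbit has finite index, and conversely), but you prove it uniformly with Eichler transvections $T_{e,u}$ plus the spanning of $\Lambda\otimes\mathbb{Q}$ by primitive isotropic vectors, with no division into strongly indefinite versus nearly definite cases and no appeal to Wall's transitivity theorem; you also prove the finiteness direction of the first bullet directly rather than citing Wall, and your passage from the finite invariant set $S$ to a fixed vector (the kernel of $D\to\mathrm{Sym}(S)$) is just a different bookkeeping of the same finite-index argument. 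What your route buys is self-containedness and a marginally stronger conclusion: nowhere do you use that the invariant classes are characteristic, only that they are nonzero. What the paper's route buys is the explicit transitivity and orbit structure (Proposition~\ref{transitive}, Lemma~\ref{transitive'}) that it reuses elsewhere, notably for the Kodaira--Thurston manifolds in Proposition~\ref{KT}. The one place your argument genuinely needs care is the one you flag: every primitive isotropic $e$ must admit a companion $u$ ($u\perp e$, independent, of even norm), but since $e^\perp$ has rank $r(\Gamma)-1\geq 2$ one may take any independent $u_0\in e^\perp$ and replace it by $2u_0$, so this is indeed routine.
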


\subsubsection{Infinite transitive actions for strongly indefinite $\Gamma$}
A vector $x\in \Lambda$ is called primitive if it cannot be
divided by any integer except $\pm 1$. A vector $x$ is called
characteristic if for all vectors $y$ in $\Lambda$, $x\cdot
y\equiv y\cdot y\  (\text{mod} \ 2)$. Otherwise, it is called
ordinary.
It is clear that $A(\Gamma)$ preserves the norm and type of each vector.
When $\Gamma$ is strongly indefinite, $A(\Gamma)$ often acts
transitively with infinite orbits.

\begin{prop}\label{transitive}
Assume $\Gamma$ is strongly indefinite.
\begin{enumerate}
\item $A(\Gamma)$ acts transitively on primitive vectors of given
norm and type.

\item For any $k\in\mathbb{Z}$, there are infinitely many nonzero characteristic classes of norm
$\sigma(\Gamma)+8k$.

\end{enumerate}
\end{prop}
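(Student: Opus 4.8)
The plan is to reduce both statements to the classification normal forms of Theorem~\ref{classification} and then argue by explicit transformations. Strong indefiniteness means $\min\{b^+,b^-\}\ge 2$, so in normal form $\Gamma$ contains two orthogonal hyperbolic planes $U\oplus U$ in the even case $pU\oplus qE$, and at least two $\langle 1\rangle$ summands together with at least two $\langle -1\rangle$ summands in the odd case $m\langle 1\rangle\oplus n\langle -1\rangle$ (so $m,n\ge 2$). I will also invoke the classical fact (van der Blij's lemma) that every characteristic vector $c$ satisfies $Q(c)\equiv\sigma(\Gamma)\ (\mathrm{mod}\ 8)$; this explains why $\sigma(\Gamma)+8k$ is exactly the range of possible characteristic norms, so part (2) asserts that each such value is attained infinitely often.

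For part (1) I would show that every primitive vector can be carried by an element of $A(\Gamma)$ to a standard representative depending only on its norm and type. The engine is the subgroup generated by reflections in vectors of norm $\pm 1,\pm 2$ together with the Eichler transvections $x\mapsto x-(u\cdot x)e+(e\cdot x)u-\tfrac{1}{2}Q(u)(e\cdot x)e$ attached to an isotropic vector $e$ and a vector $u\perp e$; these lie in $A(\Gamma)$ and act richly precisely because of the hyperbolic summands. Using an isotropic vector supplied by $U$ in the even case, or by $a+b$ with $Q(a)=1,\,Q(b)=-1$ in the odd case, one first moves a primitive $v$ into the sublattice spanned by a fixed hyperbolic plane and one extra generator, and then normalizes the remaining coordinate. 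In the even case the resulting representative is $e+\tfrac{1}{2}Q(v)f$, which depends only on $Q(v)$; in the odd case the parity pattern of the coordinates records exactly whether $v$ is ordinary or characteristic, so the representative depends only on $Q(v)$ and the type. Equivalently one may compute the isometry class of $v^{\perp}$ and appeal to the Eichler--Wall cancellation theorem for indefinite unimodular lattices. I expect this to be the main obstacle: ensuring the reduction terminates at a \emph{unique} representative, and in particular tracking the characteristic-versus-ordinary dichotomy in the odd case, where both types can share a norm $\equiv\sigma\ (\mathrm{mod}\ 8)$ yet lie in distinct orbits.

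For part (2) I would exhibit the infinite families explicitly in normal form. In the even case the characteristic vectors are exactly $2\Lambda$, so one of norm $\sigma(\Gamma)+8k=8q+8k$ has the shape $2\lambda$ with $Q(\lambda)=2(q+k)$; inside $U\oplus U=\langle e_1,f_1\rangle\oplus\langle e_2,f_2\rangle$ the vectors $\lambda_t=e_1+(q+k-t)f_1+e_2+tf_2$ all have norm $2(q+k)$ and are distinct as $t$ ranges over $\mathbb{Z}$, giving infinitely many nonzero characteristic classes $2\lambda_t$ of the required norm. In the odd case the characteristic vectors are exactly those with all coordinates odd. Starting from the all-ones vector (norm $\sigma(\Gamma)$), I adjust one $\langle 1\rangle$-coordinate and one $\langle -1\rangle$-coordinate to hit norm $\sigma(\Gamma)+8k$ for the prescribed $k$: replacing a coordinate $1$ by $2j+1$ changes its contribution by $\pm 4j(j+1)$, i.e.\ by $\pm 8$ times a triangular number, and the differences of triangular numbers exhaust $\mathbb{Z}$. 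I then fix the norm and run an infinite family using a \emph{second} $\langle 1\rangle$- and a second $\langle -1\rangle$-coordinate: replacing each of them by $2a+1$ changes the norm by $4a(a+1)-4a(a+1)=0$, so letting $a$ vary over $\mathbb{Z}$ produces infinitely many characteristic vectors of the same norm $\sigma(\Gamma)+8k$. The availability of two summands of each sign, guaranteed by strong indefiniteness, is exactly what makes both the norm adjustment and the infinite family possible.
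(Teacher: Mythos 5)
Your part (2) is correct and complete, and it is in the same spirit as the paper's argument: both produce explicit infinite families of characteristic vectors inside the normal forms of Theorem~\ref{classification}. The difference is organizational. The paper first reduces to rank $4$ ($2U$ in the even case, $2\langle 1\rangle\oplus 2\langle -1\rangle$ in the odd case), exhibits infinitely many characteristic vectors of each allowed norm there, and then extends to the general form by appending $0$ (even case) or an all-ones block (odd case). You instead work directly in the full normal form, using the two hyperbolic planes (resp.\ the two $\langle 1\rangle$'s and two $\langle -1\rangle$'s) guaranteed by strong indefiniteness: one pair of coordinates adjusts the norm, a second pair runs the infinite family. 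Your identification of the characteristic vectors ($2\Lambda$ in the even case, all-odd-coordinate vectors in the odd case), the triangular-number bookkeeping, and the appeal to van der Blij's lemma to explain why $\sigma(\Gamma)+8k$ is the correct list of norms are all sound; if anything, your version avoids the paper's slightly fussy extension step.

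Part (1) is where your proposal falls short. The paper does not prove this statement: it simply cites Theorem 6 of \cite{wall-uni}. You instead sketch a reproof via reflections, Eichler transvections, and normalization to a standard representative (or, alternatively, cancellation applied to $v^{\perp}$). That is the right machinery --- it is essentially how Wall's theorem is proved --- but your write-up leaves the decisive step open, and you say so yourself: you never verify that the reduction terminates at a representative depending only on norm and type, and the odd case, where primitive vectors of the same norm genuinely split into ordinary and characteristic orbits, is exactly the delicate point. As written, part (1) is an outline with an acknowledged hole rather than an argument. The efficient fix is to do what the paper does and invoke Wall's Theorem 6; otherwise you must actually carry out the normalization in both the even and odd cases, which is a substantial piece of lattice theory and not something to be waved through in a paragraph.
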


\begin{proof}
\begin{enumerate}
\item
See Theorem 6 in \cite{wall-uni}.

\item  It is enough to consider the case $r(\Gamma)=4$. General case then follows by
extension.

If $\Gamma=2U$ and $x_0, y_0, x_1, y_1$ is a basis of $\Lambda$
such that  $x_0\cdot y_0=x_1\cdot y_1=1$, we know that any
characteristic class is of the form $x=2ax_0+2by_0+2cx_1+2dy_1$
for some $a,b,c,d\in \mathbb{Z}$. We have $Q(x)=8(ab+cd)$ and it
is clear that there are infinitely many quadruple $(a,b,c,d)$
satisfying $ab+cd=k$. For instance, $ak+(1-a)k=k$ for any $a$.

If $\Gamma$ is even, then it is of the form $2U\oplus L$. Any
characteristic vector $c$ for $2U$ gives rise to a characteristic
vector $(c, 0)$ for $\Gamma$ of the same norm.

If $\Gamma=2<1>+2<-1>$ and $\Lambda $ has a basis $p_1, p_2,
q_1,q_2$ with  $p_1^2=p_2^2=-q_1^2=-q_2^2=1$, any characteristic
class is of the form $x=ap_1+bp_1+cq_1+dq_2$
for some odd integers $a,b,c,d$. If $k=2^tr$ and $r$ is odd, we
can set $a=2^{t+1}+r$ and $c=2^{t+1}-r$.
Then $Q(ax_1+cy_1)=8k$. So $Q(ap_1+bp_1+cq_1+bq_2)=8k$ for any
$b$.

If $\Gamma$ is odd, then it is of the form $(2<1>+2<-1>)\oplus L$.
Any characteristic vector $c$ for $2<1>+2<-1>$ gives rise to a
characteristic vector $(c, (1))$ for $\Gamma$ whose norm differs
from that of $c$ by a fixed constant.

\end{enumerate}
\end{proof}

In some cases, higher dimensional subspaces also have such transitive property.
A subgroup $W\subset \Lambda$ is called full if $(W\otimes \mathbb{Q})\cap \Lambda=W$.
We define
$$\mathcal{U}(\Gamma)=\{W\subset \Lambda | W:\text{ full,
isotropic, of dimension 2}\}$$ 

\begin{lem}\label{transitive'} $A(2U)$ acts transitively on $\mathcal{U}(2U)$ and $|\mathcal{U}(2U)|=\infty$.
\end{lem}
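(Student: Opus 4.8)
The plan is to treat the two assertions separately: first exhibit an explicit infinite family of full isotropic $2$-planes, and then establish transitivity by reducing an arbitrary $W\in\mathcal U(2U)$ to one fixed standard plane, using the transitivity on primitive vectors recorded in Proposition \ref{transitive}. Throughout I would keep the basis $x_0,y_0,x_1,y_1$ of $2U$ with $x_0\cdot y_0=x_1\cdot y_1=1$ and all other products zero, and set $W_0=\langle x_0,x_1\rangle$, which is visibly full, isotropic and of rank $2$. For the cardinality claim I would consider, for each $n\in\mathbb{Z}$, the plane $W_n=\langle x_0+ny_1,\ x_1-ny_0\rangle$. A short computation shows that both generators are isotropic and orthogonal to one another, so $W_n$ is isotropic; writing the two generators as rows of a $2\times 4$ integer matrix, one of the $2\times 2$ minors equals $1$, so $W_n$ is saturated, i.e. full. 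Finally, the relation ``($y_1$-coordinate)$=n\cdot$($x_0$-coordinate)'' holds identically on $W_n$, which forces $W_n\neq W_m$ for $n\neq m$; hence $|\mathcal U(2U)|=\infty$.

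For transitivity, given $W\in\mathcal U(2U)$ I would first choose $v\in W$ primitive in $W$. Fullness of $W$ guarantees that $v$ is primitive in $\Lambda$, and isotropy of $W$ makes $v$ an isotropic vector. Since $2U$ is even and $v$ is primitive, $v\notin 2\Lambda$, so $v$ is ordinary; as $x_0$ is likewise primitive, isotropic and ordinary, Proposition \ref{transitive}(1) supplies $\phi\in A(2U)$ with $\phi(v)=x_0$. Replacing $W$ by $\phi(W)$, I may assume $x_0\in W$. Because $W$ is isotropic and contains $x_0$, we have $W\subset x_0^\perp=\langle x_0,x_1,y_1\rangle$, so I can extend $x_0$ to a basis $\{x_0,w\}$ of $W$ with $w=ax_0+cx_1+dy_1$. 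Here isotropy gives $w\cdot w=2cd=0$, while fullness forces $\gcd(c,d)=1$; hence $(c,d)=(\pm1,0)$ or $(0,\pm1)$, so $W=\langle x_0,x_1\rangle=W_0$ or $W=\langle x_0,y_1\rangle$. The isometry $\psi$ interchanging $x_1,y_1$ and fixing $x_0,y_0$ lies in $A(2U)$ and carries the second plane to $W_0$, so every $W$ lies in the $A(2U)$-orbit of $W_0$, proving transitivity.

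I expect the main obstacle to be the transitivity step, specifically verifying that a primitive vector of $W$ remains primitive in $\Lambda$ and that the resulting plane through $x_0$ is forced to be standard: it is the combination of the fullness condition $\gcd(c,d)=1$ with the isotropy condition $cd=0$ that pins down $W$, and one must not overlook the second case $\langle x_0,y_1\rangle$, which is disposed of only after applying the swap $\psi$. The infinitude step is routine once the family $W_n$ is written down, the only points needing care being the saturation (via the unit $2\times 2$ minor) and the pairwise distinctness of the planes.
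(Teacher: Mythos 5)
Your proof is correct and follows essentially the same route as the paper: transitivity is obtained by using Proposition \ref{transitive}(1) to move a primitive (isotropic, hence ordinary, since $2U$ is even) vector of $W$ to $x_0$ and then showing the constraints force the plane to be $\langle x_0,x_1\rangle$ or $\langle x_0,y_1\rangle$, and infinitude is obtained by producing infinitely many full isotropic planes. If anything, your write-up is more complete than the paper's: the paper elides the swap isometry identifying $\langle x_0,y_1\rangle$ with $\langle x_0,x_1\rangle$ and the role of fullness in pinning down the plane, and it deduces $|\mathcal{U}(2U)|=\infty$ from an asserted but unproved classification of $\mathcal{U}(2U)$, whereas you verify an explicit infinite family $W_n$ in a self-contained way.
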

\begin{proof}
Using the notations in the proof of Proposition~\ref{transitive},
if $Y\in\mathcal{U}(2U)$ and $x, y\in Y$ are linearly independent primitive vectors,
Proposition~\ref{transitive} (1) implies that $\alpha x=x_0$ for
some $\alpha\in A(2U)$. So $\alpha y=ax_0+bx_1$ or $ax_0+by_1$ for
some $a, b$. Hence $\alpha (Y)=<x_0,x_1>$ or $<x_0, y_1>$ and
$\mathcal{U}(2U)$ is transitive.
Moreover, we can show that
$$\mathcal{U}(2U)=\{<ax_0+bx_1, by_0-ay_1>,<ax_0+by_1, by_0-ax_1>|gcd(a,b)=1\}.$$
So $|\mathcal{U}(2U)|=\infty$.
\end{proof}

\subsubsection{Infinite orbits  for nearly definite $\Gamma$}

Now consider the case that $\Gamma$ is nearly definite, i.e.
$b^+=1$ or $b^-=1$.
In this case, we cannot always establish transitivity of actions. Instead we
show the infiniteness of orbits.

\begin{lem}\label{key-lemma}
Let $\Gamma$ be nearly definite of rank  at least $3$. For any
nonzero $x\in \Lambda$,  the orbit of $x$ under $A(\Gamma)$ is
infinite.
\end{lem}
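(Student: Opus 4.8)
The plan is to produce, for each nonzero $x$, a single isometry of infinite order whose iterates move $x$ to infinitely many distinct lattice points. Since $A(\Gamma)=A(-\Gamma)$ and these two forms have identical orbits, I may assume $b^+=1$, so $\Gamma$ has signature $(1,n)$ with $n=r(\Gamma)-1\ge 2$; that is, $\Lambda$ is a Lorentzian lattice. All the work then takes place in this single normalized case.

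The main tool is the Eichler--Siegel transvection. Given an isotropic vector $u$ (so $u\cdot u=0$) and a vector $a\in u^{\perp}$ with $a\cdot a$ even, I set
\[
t_{u,a}(y)=y+(u\cdot y)\,a-(a\cdot y)\,u-\tfrac12 (a\cdot a)(u\cdot y)\,u .
\]
A direct computation (using $u\cdot u=0$ and $a\cdot u=0$) shows $t_{u,a}$ preserves $\Gamma$, and since every coefficient above is an integer once $a\cdot a$ is even, it maps $\Lambda$ into $\Lambda$; hence $t_{u,a}\in A(\Gamma)$. One checks $t_{u,a}(u)=u$ and $t_{u,a}(a)=a-(a\cdot a)\,u$, so $t_{u,a}$ is unipotent on $\langle u,a\rangle$. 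Writing $c=u\cdot x$, an easy induction then gives $t_{u,a}^{\,k}(x)=x+kc\,a+\nu_k u$ for suitable integers $\nu_k$.

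It remains to choose $u$ and $a$ well. First I pick $u$ with $c=u\cdot x\neq 0$: over $\mathbb{R}$ the null cone of a form of signature $(1,n)$, $n\ge1$, spans the whole space, and in each explicit model ($\langle 1\rangle\oplus n\langle -1\rangle$ or $U\oplus qE$ from Theorem \ref{classification}) the integral isotropic vectors already span $\Lambda\otimes\mathbb{Q}$; by nondegeneracy some isotropic $u$ pairs nontrivially with $x$. Next I choose $a\in u^{\perp}$ not proportional to $u$, which is possible because $\dim u^{\perp}=n\ge 2$ while $\mathbb{Q}u$ is one-dimensional; replacing $a$ by $2a$ if necessary makes $a\cdot a$ even without affecting independence. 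Since $c\neq 0$ and $a\notin\mathbb{Q}u$, the $a$-component $kc$ distinguishes $t_{u,a}^{\,k}(x)=x+kc\,a+\nu_k u$ for distinct $k$, so the cyclic subgroup $\langle t_{u,a}\rangle\subset A(\Gamma)$ already sends $x$ to infinitely many distinct vectors, and the $A(\Gamma)$-orbit of $x$ is infinite.

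The steps requiring the most care are the two existence claims, both handled by the classification (Theorem \ref{classification}) together with the rank bound $n\ge 2$: that some integral isotropic $u$ satisfies $u\cdot x\neq 0$, and that an admissible $a$ exists. The genuinely delicate point is the integrality of $t_{u,a}$, which is precisely what forces the evenness of $a\cdot a$; the doubling trick $a\mapsto 2a$ supplies this cheaply, so I expect no real obstruction beyond bookkeeping.
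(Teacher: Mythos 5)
Your proof is correct, but it takes a genuinely different route from the paper's. The paper splits into cases by type and works with reflections: in the odd case it takes an orthogonal basis $H_1,\dots,H_n,F$ with $H_i^2=-F^2=1$ and iterates reflections in the norm-one vectors $\gamma=\varepsilon_1H_1+\varepsilon_2H_2+F$, choosing the signs $\varepsilon_i$ so that the $F$-coefficient of the image is strictly monotone; in the even case it writes $\Gamma\cong U\oplus lE$ and iterates reflections in the norm-two vectors $\omega+kx$ (with $\omega\in lE$, $\omega^2=2$), splitting into three sub-cases according to which components of the vector vanish. You replace all of this with a single family of unipotent isometries, the Eichler--Siegel transvections $t_{u,a}$: since $t_{u,a}^{\,k}=t_{u,ka}$ sends $x$ to $x+k(u\cdot x)\,a$ modulo $\mathbb{Q}u$, one well-chosen pair $(u,a)$ already produces an infinite orbit. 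This buys uniformity---the paper's odd/even dichotomy collapses to the doubling trick that makes $a\cdot a$ even, which is exactly what integrality of $t_{u,a}$ requires---and the same argument works verbatim for any indefinite unimodular form of rank at least $3$, including the strongly indefinite case the paper treats separately via Proposition \ref{transitive}. The price is your two existence claims, and both are discharged correctly from the explicit models of Theorem \ref{classification}: integral isotropic vectors span $\Lambda\otimes\mathbb{Q}$ (in $\langle 1\rangle\oplus n\langle -1\rangle$ take $e_0\pm e_i$; in $U\oplus qE$ take the two standard isotropic generators of $U$ together with $\eta+u_1-\tfrac12(\eta\cdot\eta)u_2$ for $\eta$ ranging over a basis of $qE$), so nondegeneracy forces some isotropic $u$ with $u\cdot x\neq 0$, and rank at least $3$ gives $a\in u^{\perp}$ independent of $u$. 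Your isometry and integrality checks for $t_{u,a}$, and the induction $t_{u,a}^{\,k}(x)=x+k(u\cdot x)a+\nu_k u$, are all sound, so the proof stands: the paper's version is the more elementary and explicit, yours is the shorter, more uniform, and more portable one.
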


\begin{proof}
We only consider nearly positive definite case. First consider odd
type case. Let $H_1, \cdots, H_n, F,\ n\geq 2$, be an orthogonal
basis of $\Lambda$ such that $H_i^2=-F^2=1$. Let $x=a_1H_1+\cdots
+a_nH_n+bF$. Without loss of generality, we may assume
$|a_1|\geq|a_2|\geq\cdots\geq |a_n|$. Consider the reflection
$R_\gamma$ of $\gamma=\varepsilon_1 H_1+\varepsilon_2 H_2+F$ where
$\varepsilon_i=\pm 1$. It is easy to see that
\begin{eqnarray*}
R_\gamma(F) &=& 2\varepsilon_1H_1+2\varepsilon_2H_2+3F\\
 R_\gamma(H_1) &=& -H_1-2\varepsilon_1\varepsilon_2H_2-2\varepsilon_1F\\
 R_\gamma(H_2)&=& -2\varepsilon_1\varepsilon_2H_1-H_2-2\varepsilon_2F
 \end{eqnarray*}
So
\begin{eqnarray*}
R_\gamma(x) &=
(-a_1-2\varepsilon_1\varepsilon_2a_2+2\varepsilon_1b)H_1
+(-2\varepsilon_1\varepsilon_2a_1-a_2+2\varepsilon_2b)H_2  \\
\quad &+ a_3H_3+\cdots +a_nH_n
+(-2a_1\varepsilon_1-2a_2\varepsilon_2+3b)F
\end{eqnarray*}
Choosing $\varepsilon_1, \varepsilon_2$ appropriately, the
coefficient of $F$ in $R_\gamma(x)$ is monotone.

Now consider even type case. $\Gamma$ is equivalent to $U+lE$ with
$l>0$. Let $x,y$ be a standard basis of $U$, i.e. $x^2=y^2=0$,
$x\cdot y=1$. Then any (nonzero) class in $\Gamma$ has a unique
decomposition $\eta+ax+by$, where $\eta\in lE$. There are three cases.
\begin{enumerate}
\item $\eta\not=0$, $ax+by\not=0$. Without loss of generality,
assume $b\not=0$. Since $lE$ has a basis such that each vector has
square $2$, there exists an $\omega\in lE$ such that $\omega^2=2$,
and $\omega\cdot\eta\not=0$. For any $k\in \Z$, $(\omega+kx)^2=2$.
Consider \begin{eqnarray*}
R_{\omega+kx}(\eta&+&ax+by)=\eta+ax+by-(\omega\cdot\eta
+kb)(\omega+kx) \\
&=&\eta -(\omega\cdot\eta +kb)\omega +(a-(\omega\cdot\eta +kb)k)x
+by. \end{eqnarray*}
 We can choose $k\in \Z$ such that $\eta
-(\omega\cdot\eta +kb)\omega\not=0$, and the coefficient of $x$ is
monotone (decreases if $b>0$, and increases if $b<0$). Repeating
this process, we see that the orbit is infinite.

\item $0\not=\eta\in lE$, $ax+by=0$. Choose $\omega\in lE$ such
that $\omega^2=2$. Consider
$$R_{\omega+y}(\eta) =\eta -(\omega\cdot\eta) (\omega+y) =\eta
-(\omega\cdot\eta) \omega -(\omega\cdot\eta)y. $$ By properties of
$E$, we can choose $\omega$ such that $\omega\cdot\eta\not=0$, and
$\eta -(\omega\cdot\eta) \omega\not=0$. Then we are back to case
(1).

\item $\eta=0$, $ax+by\not=0$. We may assume $b\not=0$. Choose
$\omega\in lE$ such that $\omega^2=2$, and $k\not=0$. Consider
$$ R_{\omega+kx}(ax+by) =ax+by-kb(\omega+kx) =-(kb)\omega
+(a-k^2b)x +by. $$ Then we are back to case (1) again.
\end{enumerate}
\end{proof}

\subsubsection{Proof of Theorem  \ref{a}}

\begin{proof} Let us first show that $A(\Gamma)$ is finite if and only if
$\Gamma$ is definite or indefinite of rank $2$. The if part is
known, namely, if $\Gamma$ is definite or indefinite of rank $2$,
then $A(\Gamma)$ is finite. See the remarks after conclusion of
\cite{wall-uni}.

For the only if part, when $\Gamma$ is strongly
indefinite, it follows from Proposition  \ref{transitive} (1) and (2).

 When $\Gamma$
is nearly definite of rank $\geq 3$, it follows from
Lemma~\ref{key-lemma}.

Notice that exactly the same argument proves the statement in the second bullet.
\end{proof}


\section{$D(M)$ with infinite index}
Let $M$ be a closed, oriented, smooth $4-$manifold. A symplectic
form on $M$ is a closed  $2-$form $\omega$ on $M$ such that
$\omega\wedge \omega$ is a volume form inducing the given
orientation of $M$. Given $\omega$, it comes with a
contractible set of  almost complex structures tamed by $\omega$.
Suppose an almost complex structure $J$ is from this contractible
set. The canonical class of $\omega$ is then defined to be
$-c_1(M, J)$, and denoted by $K_{\omega}$. We call $K_{\omega}\in
H^2(M;\mathbb Z)$ a symplectic canonical class of $M$. It is a
characteristic class in $\Gamma_M$ with norm
$2\chi(M)+3\sigma(\Gamma_M)$, where $\chi(M)$ is the Euler number of $M$.

Let
$$\mathcal K_M=\{K_{\omega}|\omega \hbox{ a symplectic form on $M$}\}$$
 be the set of symplectic canonical classes
of $M$.  Clearly, $\mathcal K_M$ is nonempty if and only if $M$
has symplectic structures. Let $\overline {\mathcal K}_M$ be the
image of $\mathcal K_M$ in $\Gamma_M$.

\begin{thm}\label{canonical classes}
$\overline{\mathcal K}_M$ has the following properties.

\begin{itemize}
\item $\overline{\mathcal K}_M$ is preserved by $D(M)$.

\item Suppose $M$  has symplectic structures and  $\kappa(M)\geq
0$. Then $\overline{\mathcal K}_M$ is a finite set.

\item  $\overline {\mathcal K}_M$ contains $0$ if and only if $M$
is a symplectic Calabi-Yau surface.
\end{itemize}
\end{thm}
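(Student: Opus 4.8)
The plan is to treat the three bullets separately, the first and third being formal and the second carrying the gauge-theoretic content. For the first bullet I would use naturality of the canonical class. If $\phi\in\mathrm{Diff}^+(M)$ and $\omega$ is symplectic with a tamed almost complex structure $J$, then $\phi^*\omega$ is closed and $(\phi^*\omega)\wedge(\phi^*\omega)=\phi^*(\omega\wedge\omega)$ induces the given orientation (as $\phi$ preserves orientation), so $\phi^*\omega$ is symplectic and $\phi^*J$ is tamed by it. Since $c_1$ is natural, $K_{\phi^*\omega}=\phi^*K_\omega$, whence $\phi^*\mathcal K_M=\mathcal K_M$; passing to $\Lambda_M=H^2(M;\mathbb{Z})/\mathrm{tors}$, the automorphism of $\Gamma_M$ induced by $\phi$ preserves $\overline{\mathcal K}_M$, and since every element of $D(M)$ is such an automorphism the first bullet follows. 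For the third bullet I would unwind definitions: $0\in\overline{\mathcal K}_M$ means some $K_\omega$ has zero image in $\Lambda_M$, i.e. $K_\omega$ is torsion, i.e. $K_\omega$ vanishes in $H^2(M;\mathbb{R})$, which is precisely the defining condition for $(M,\omega)$ to be symplectic Calabi--Yau.

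The second bullet is the substantive one. Every $K_\omega$ is characteristic of the fixed norm $2\chi(M)+3\sigma(\Gamma_M)$, but by Proposition~\ref{transitive}(2) and Lemma~\ref{key-lemma} this alone permits infinitely many candidates, so the finiteness must come from gauge theory. The key input is Taubes' nonvanishing theorem: for any symplectic $\omega$ the canonical $\mathrm{Spin}^c$ structure carries a nonzero Seiberg--Witten invariant, so each $K_\omega$ is a basic class. When $b^+>1$ the set of basic classes is finite, giving finiteness of $\overline{\mathcal K}_M$ in the minimal case. To remove minimality I would apply the symplectic blow-up formula: for a minimal model with $M=M_{\min}\#k\,\overline{\mathbb{C}\mathbb{P}}^2$, each canonical class has the form $K_{\omega_{\min}}+\sum_i E_i$ with $\{E_i\}$ a pairwise orthogonal family of exceptional classes, each satisfying $E_i^2=-1$ and, by adjunction, $K_\omega\cdot E_i=-1$. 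Hence $\overline{\mathcal K}_M$ is controlled by the finite set $\overline{\mathcal K}_{M_{\min}}$ together with the set $\mathcal E_M$ of symplectic exceptional classes, and $\mathcal E_M$ is finite for $\kappa(M)\ge 0$ because these classes are likewise pinned down by basic-class data.

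The main obstacle I anticipate is the case $b^+=1$, which genuinely arises here (for instance blow-ups of the Enriques surface or of torus bundles, and $b^+=1$ surfaces of general type). There the Seiberg--Witten invariant is chamber dependent, so the off-the-shelf finiteness of basic classes is unavailable; one must argue through the wall-crossing formula in the distinguished chamber determined by $\omega$, and separately verify that $\mathcal E_M$ remains finite. Finiteness statements of exactly this kind are developed in \cite{L06}, and I would extract them from that framework rather than reprove them; the residual task is then the bookkeeping confirming that the blow-up decomposition exhausts $\overline{\mathcal K}_M$.
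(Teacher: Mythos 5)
Your proposal is correct and follows essentially the same route as the paper: naturality of $K_\omega$ under pullback for the first bullet, unwinding the definition of symplectic Calabi--Yau for the third, and Taubes's nonvanishing theorem (finiteness of basic classes) for $b^+\geq 2$ with the chamber-dependent $b^+=1$ case deferred to the literature for the second. Two minor notes: for $b^+\geq 2$ the finiteness of basic classes holds with no minimality assumption, so your blow-up reduction is superfluous there; and the reference the paper actually invokes for the $b^+=1$ case is \cite{LLiu} (uniqueness of the symplectic canonical class up to sign) rather than \cite{L06}.
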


\begin{proof} The first statement follows from the following simple observation:
For any symplectic form $\omega$ and orientation preserving
diffeomorphism $\phi$, $\phi^*\omega$ is still a symplectic form,
and $K_{\phi^*\omega}=\phi^*K_{\omega}$.

The second statement in the case $b^+\geq 2$ follows from Taubes's
fundamental results in  \cite{T94} and \cite{T96},    and in  the
case  $b^+=1$ it is established  in  \cite{LLiu}.

The last statement is noted in \cite{L06}.
\end{proof}

We will denote $\Gamma_M$ as $\Gamma$ in the following.

\subsection{$\kappa=-\infty$}
There is a classification of $\kappa=-\infty$ manifolds: $M$ is either rational or ruled (\cite{Liu}, \cite{OO}).

 For a rational $4$-manifold $M=\C \mathbb P^2 \# n\overline{\C
\mathbb P^2}$ with $n\leq 9$ or $S^2\times S^2$, a classical result of Wall \cite{W64} says that $D(M)$
coincides with $A(\Gamma)$.

For $M=\C \mathbb P^2 \# n \overline{\C
\mathbb P}^2$ with $n\geq 10$, Friedman and Morgan \cite{FM88} showed that
$D(M)$ is a  subgroup of $A(\Gamma)$ with infinite index, and
characterized it in terms of super $P-$cells. Another proof of these
results appeared in \cite{LL05} by presenting an explicit and finite
generating set of $D(M)$.

The case  of irrational ruled $4$-manifolds has been studied in
\cite{FM97} and \cite{LL03}. Let $\Sigma$ be a closed Riemann
surface of positive genus, and $\Sigma\ti\times S^2$ be the
nontrivial $S^2$-bundle over $\Sigma$. Then any minimal irrational
ruled manifold $M$ is diffeomorphic to $\Sigma\times S^2$ or
$\Sigma\ti\times S^2$ for some  $\Sigma$. For such manifolds, it is known that $A(\Gamma)\cong \Z_2
\oplus\Z_2$, and $D(M)\cong \Z_2$ \cite{LL97}. So $D(M)$ is a proper
subgroup of $A(\Gamma)$, and both are finite groups.

Any non-minimal
irrational ruled $4$-manifold is diffeomorphic to $(\Sigma\times
S^2)\# n\overline{\C \mathbb P^2}$ with $n\geq 1$. There is a unique
spherical class $f$ (up to sign) of square zero, namely the class
represented by the $S^2$ factor in the $\Sigma\times S^2$ summand.
In the case, Friedman and Morgan
proved that an automorphism $\tau\in A(\Gamma)$ is in $D(M)$ if and
only if $\tau(f)=\pm f$. By presenting an explicit and finite
generating set of $D(M)$, it was  proved in \cite{LL05} that for
$M=(\Sigma\times S^2)\# n\overline{\C \mathbb P^2}$ with $n\geq 1$,
$D(M)$ is a subgroup of $A(\Gamma)$ with infinite index. Alternatively, this also 
follows from Lemma \ref{key-lemma}.

We summarize the discussion in the following proposition.

\begin{prop}\label{-infty}

Suppose $\kappa(M)=-\infty$. Then

\begin{itemize}
 \item $D(M)=A(\Gamma)$ if $M=S^2\times S^2$ or $M=\C \mathbb P^2 \# n\overline{\C
\mathbb P^2}$ for $n\leq 9$.

\item $A(\Gamma)$ is finite and $D(M)$ is a subgroup of  index $2$  if $M$ is an $S^2-$bundle over
a positive genus surface.

\item $D(M)$ is of infinite index in all other cases.

\end{itemize}

\end{prop}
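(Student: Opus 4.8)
The plan is to establish Proposition~\ref{-infty} by combining the classification of $\kappa = -\infty$ manifolds with the criterion from Theorem~\ref{a} (second bullet) and the cited literature. Since the statement has three bullets covering all cases, I would organize the proof by first recalling that $\kappa(M) = -\infty$ forces $M$ to be rational or ruled, so $M$ is diffeomorphic to one of the following: $S^2 \times S^2$, $\C\mathbb P^2 \# n\overline{\C\mathbb P^2}$, an $S^2$-bundle over a positive genus surface, or a non-minimal irrational ruled manifold $(\Sigma \times S^2)\# n\overline{\C\mathbb P^2}$ with $n \geq 1$. The first two bullets are essentially citations: the equality $D(M) = A(\Gamma)$ for $S^2\times S^2$ and for $\C\mathbb P^2 \# n\overline{\C\mathbb P^2}$ with $n\leq 9$ is Wall's theorem \cite{W64}, and the index-$2$ statement for $S^2$-bundles over positive genus surfaces follows from the computation $A(\Gamma)\cong \Z_2\oplus\Z_2$ and $D(M)\cong \Z_2$ in \cite{LL97}.

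The substance of the proof lies in the third bullet, the infinite-index claim, which must cover both $\C\mathbb P^2 \# n\overline{\C\mathbb P^2}$ with $n\geq 10$ and the non-minimal irrational ruled manifolds. For the rational case I would cite the result of Friedman and Morgan \cite{FM88} (with the alternative generating-set argument of \cite{LL05}) directly. For the irrational case, I have two routes available. The first is to invoke the explicit generating-set computation of \cite{LL05} together with the Friedman--Morgan characterization that $\tau \in D(M)$ if and only if $\tau(f) = \pm f$, where $f$ is the unique (up to sign) spherical class of square zero coming from the $S^2$ factor. The second, cleaner route is to apply Lemma~\ref{key-lemma}: for these manifolds the intersection form is nearly definite of rank at least $3$, so every nonzero orbit under $A(\Gamma)$ is infinite, and in particular any finite-index subgroup must fail to fix the characteristic class structure that $D(M)$ preserves.

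To make the Lemma~\ref{key-lemma} route rigorous, I would argue as follows. The manifold $(\Sigma\times S^2)\# n\overline{\C\mathbb P^2}$ with $n\geq 1$ has $b^+ = 1$, so $\Gamma$ is nearly definite of rank $2 + 2g(\Sigma) + n \geq 3$. Since $D(M)$ consists precisely of those automorphisms sending $f \mapsto \pm f$, and since by Lemma~\ref{key-lemma} the orbit of $f$ under $A(\Gamma)$ is infinite (as $f$ is a nonzero class), the set of automorphisms \emph{not} fixing $\pm f$ is large. More precisely, I would show that the cosets $\alpha D(M)$ indexed by the (infinite) orbit of $f$ are distinct: if $\alpha D(M) = \beta D(M)$ then $\beta^{-1}\alpha(f) = \pm f$, so $\alpha(f)$ and $\beta(f)$ agree up to sign, whence distinct orbit elements (mod sign) yield distinct cosets. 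Since the orbit of $f$ is infinite, this produces infinitely many cosets and $D(M)$ has infinite index.

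The main obstacle I anticipate is the bookkeeping needed to confirm that distinct orbit representatives genuinely give distinct cosets, which rests on the precise characterization $D(M) = \{\tau : \tau(f) = \pm f\}$ rather than a weaker containment; this characterization is exactly what Friedman and Morgan supply, so the deduction is clean once that input is in hand. A secondary point worth verifying is that the two separate treatments—the generating-set argument of \cite{LL05} and the orbit argument via Lemma~\ref{key-lemma}—are genuinely consistent, and that the orbit argument applies uniformly for all $n\geq 1$ and all positive genus $\Sigma$, which is guaranteed by the rank bound $\geq 3$. I would present the Lemma~\ref{key-lemma} argument as the primary proof and note the \cite{LL05} approach as an alternative, since the former is self-contained within this paper and avoids re-deriving the generating set.
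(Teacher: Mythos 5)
Your proposal is correct and takes essentially the same approach as the paper: the paper proves Proposition~\ref{-infty} by assembling the same citations (Wall \cite{W64} for the first bullet, \cite{LL97} for the second, \cite{FM88} and \cite{LL05} for the rational case $n\geq 10$), and for the non-minimal irrational ruled case it explicitly notes that infinite index "alternatively follows from Lemma~\ref{key-lemma}" --- precisely the coset argument you develop in detail, which is valid. Two harmless inaccuracies worth fixing: the rank of $\Gamma$ for $(\Sigma\times S^2)\# n\overline{\C \mathbb P^2}$ is $2+n$, not $2+2g(\Sigma)+n$ (K\"unneth gives $b_2(\Sigma\times S^2)=2$ regardless of genus), though both bounds exceed $3$; and your coset-distinctness step needs only the containment $D(M)\subseteq\{\tau : \tau(f)=\pm f\}$, which already follows from the uniqueness (up to sign) of the square-zero spherical class $f$, so the deep direction of the Friedman--Morgan characterization is not actually required, contrary to your closing remark.
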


\subsection{$\kappa=0$ with $\Gamma$ odd, and $\kappa\geq 1$}

\subsubsection{$\kappa=0$ with $\Gamma$ odd}

\begin{prop}\label{odd}
When $\kappa(M)=0$ and $\Gamma$ is odd,
$A(\Gamma)$ is infinite and $D(M)$ is of infinite index in $A(\Gamma)$.
\end{prop}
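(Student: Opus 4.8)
By the Kodaira dimension classification for symplectic $4$-manifolds, a manifold with $\kappa(M)=0$ has a minimal model that is a symplectic Calabi--Yau surface or has torsion canonical class; the condition that $\Gamma$ is odd rules out the even-form cases (the K3, Enriques, and $T^4$ homotopy types). The strategy is to invoke the criterion established in the second bullet of Theorem~\ref{a}: once we know $A(\Gamma)$ is infinite, it suffices to exhibit a \emph{finite} collection of nonzero characteristic classes that is preserved by $D(M)$, whence $D(M)$ has infinite index. The natural candidate for this finite invariant set is $\overline{\mathcal K}_M$, the image in $\Gamma$ of the symplectic canonical classes, which Theorem~\ref{canonical classes} tells us is finite when $\kappa(M)\geq 0$ and is preserved by $D(M)$.

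First I would pin down the intersection form. Since $\kappa(M)=0$ forces $M$ to be (up to blow-down) a minimal symplectic $4$-manifold with $K_\omega\cdot[\omega]=0$ and $K_\omega^2=0$, one uses the known homeomorphism classification of minimal $\kappa=0$ symplectic $4$-manifolds from \cite{L06}: such a manifold has the rational homology of a K3 surface, an Enriques surface, or a torus bundle, and in the odd case its minimal model has $b^+=1$ (e.g.\ the Dolgachev-type examples with $b_2=10$, intersection form $\langle 1\rangle\oplus 9\langle -1\rangle$, or more generally a nearly definite odd form). The point is that an odd $\kappa=0$ form is \emph{nearly definite} (in fact $b^+=1$), since an odd indefinite form with $b^+\geq 2$ would be strongly indefinite and the Calabi--Yau constraints $K^2=0$, $\sigma=-2\chi/3$-type relations are incompatible with $\kappa=0$ for those signatures. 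Having identified $\Gamma$ as indefinite of rank $\geq 3$, the first bullet of Theorem~\ref{a} immediately gives that $A(\Gamma)$ is infinite.

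Next I would verify that $\overline{\mathcal K}_M$ is nonempty and contains no zero vector. Nonemptiness is clear since $M$ has symplectic structures. The key is that $\overline{\mathcal K}_M$ cannot contain $0$: by the third bullet of Theorem~\ref{canonical classes}, $0\in\overline{\mathcal K}_M$ precisely when $M$ is a symplectic Calabi--Yau surface, and by the result of \cite{L06} quoted in the introduction, that happens exactly when $\kappa(M)=0$ \emph{and} $\Gamma$ is even. Since we are assuming $\Gamma$ is odd, $\overline{\mathcal K}_M$ is a finite set of nonzero characteristic classes. Now apply the second bullet of Theorem~\ref{a} with $D=D(M)$: $A(\Gamma)$ is infinite, and $\overline{\mathcal K}_M$ is a finite nonempty $D(M)$-invariant set of nonzero characteristic classes, so $D(M)$ has infinite index.

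\textbf{The main obstacle.} The subtle point is not the final application of the criterion, which is essentially formal once the pieces are in place, but rather rigorously justifying that an odd $\kappa=0$ form is indefinite of rank at least $3$ and that the canonical class set is genuinely nonzero and finite in this regime. In particular one must ensure that the odd hypothesis cannot be met by a definite or rank-$2$ form (which would make $A(\Gamma)$ finite and the statement vacuous or false), and this requires the homeomorphism classification of $\kappa=0$ symplectic $4$-manifolds together with the characterization of symplectic Calabi--Yau surfaces from \cite{L06}. I expect that invoking these structural results correctly, and confirming that the odd case always yields an indefinite form of rank $\geq 3$ with finitely many but nonzero canonical classes, is where the real content lies.
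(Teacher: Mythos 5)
Your overall strategy coincides with the paper's: establish that $A(\Gamma)$ is infinite via the first bullet of Theorem~\ref{a}, then feed the finite, nonzero, $D(M)$-invariant set $\overline{\mathcal K}_M$ (Theorem~\ref{canonical classes}) into the second bullet. The second half of your argument is correct and essentially identical to the paper's: $0\notin\overline{\mathcal K}_M$ because being a symplectic Calabi--Yau surface is equivalent to $\kappa=0$ with $\Gamma$ even, and $\overline{\mathcal K}_M$ is finite (as $\kappa\geq 0$) and preserved by $D(M)$.

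The genuine gap is in your justification that $\Gamma$ is indefinite of rank at least $3$. Your claim that ``an odd $\kappa=0$ form is nearly definite (in fact $b^+=1$)'' is false: the K3 surface blown up at one point has $\kappa=0$ (Kodaira dimension is unchanged by blowing up, being defined through the minimal model), odd intersection form $3\langle 1\rangle\oplus 20\langle -1\rangle$, and $b^+=3$, hence is strongly indefinite. Your supporting reason --- that the Calabi--Yau constraints $K^2=0$, $K\cdot[\omega]=0$ are incompatible with a strongly indefinite odd form --- conflates $M$ with its minimal model: those constraints hold only for the minimal model, not for $M$ itself (the blown-up K3 has $K^2=-1$); likewise, Dolgachev surfaces have $\kappa=1$, not $0$, so they are not examples here. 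Moreover, even granting near-definiteness, you never actually establish rank $\geq 3$, i.e.\ $b^-\geq 2$; this matters, since a rank-$2$ indefinite form has finite $A(\Gamma)$ and the proposition would fail. The paper's route is short and avoids all of this: minimal manifolds with $\kappa=0$ are symplectic CY surfaces, which have even forms, so odd $\Gamma$ forces $M$ to be non-minimal; its minimal model $M'$ is a symplectic CY surface, and the homological classification table gives $b^-(M')\geq 1$; blowing up only increases $b^-$, so $b^-(M)\geq 2$, while $b^+(M)\geq 1$ since $M$ is symplectic. Hence $\Gamma$ is indefinite of rank $\geq 3$, and Theorem~\ref{a} applies. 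Replacing your paragraph on pinning down the form with this blow-down argument repairs the proof.
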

\begin{proof}
When $\kappa(M)=0$ and $\Gamma$ is odd,  $M$ is non-minimal. Let
$M'$ be a minimal model. Then $M'$ is  a symplectic Calabi-Yau
surface.  From the table in the next section,  $b^-(M')\geq 1$.
Thus $b^-(M)\geq 2$. Since $M$ admits  a symplectic structure we
have $b^+(M)\geq 1$. Thus by  the first statement of Theorem
\ref{a}, $A(\Gamma)$ is infinite.

Since $M$ is non-minimal, the set  $\overline {\mathcal K}_M$ is
finite, consists of nonzero classes and is invariant under $D(M)$
by Theorem \ref{canonical classes}. Now apply Theorem \ref{a}.

\end{proof}

\subsubsection{$\kappa\geq 1$}

\begin{prop} \label{ab}

If $\kappa(M)\geq 1$ and  $A(\Gamma)$ is infinite,
then $D(M)$ is of infinite index in $A(\Gamma)$.
\end{prop}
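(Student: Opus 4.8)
The plan is to reduce Proposition~\ref{ab} to the criterion in the second bullet of Theorem~\ref{a}, exactly as was done for Proposition~\ref{odd}. Since $A(\Gamma)$ is assumed infinite, Theorem~\ref{a} tells us it suffices to exhibit a $D(M)$-invariant set consisting of finitely many \emph{nonzero} characteristic classes. The natural candidate is $\overline{\mathcal K}_M$, the image of the set of symplectic canonical classes. By Theorem~\ref{canonical classes}, this set is preserved by $D(M)$, and since $\kappa(M)\geq 1\geq 0$, it is finite. So the only thing left to verify is that $\overline{\mathcal K}_M$ consists of nonzero classes, i.e. that $0\notin\overline{\mathcal K}_M$.

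First I would invoke the third bullet of Theorem~\ref{canonical classes}: $\overline{\mathcal K}_M$ contains $0$ if and only if $M$ is a symplectic Calabi-Yau surface. By the characterization quoted right after Theorem~\ref{main} (from \cite{L06}), $M$ is symplectic Calabi-Yau exactly when $\kappa(M)=0$ and $\Gamma_M$ is even. Since our hypothesis is $\kappa(M)\geq 1$, $M$ is certainly not a symplectic Calabi-Yau surface, so $0\notin\overline{\mathcal K}_M$. This gives precisely a finite, $D(M)$-invariant collection of nonzero characteristic classes.

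With these three facts in hand, the argument closes immediately: applying the second bullet of Theorem~\ref{a} to the subgroup $D=D(M)$ and the invariant finite set $\overline{\mathcal K}_M$ of nonzero characteristic classes, we conclude that $D(M)$ has infinite index in $A(\Gamma)$. The structure is therefore a near-verbatim copy of the proof of Proposition~\ref{odd}, with the non-Calabi-Yau input coming from $\kappa\geq 1$ rather than from $\Gamma$ being odd in the $\kappa=0$ case.

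I do not anticipate a genuine obstacle here, since all the substantive content has been pushed into Theorems~\ref{a} and \ref{canonical classes}, which I am entitled to assume. If there is any subtlety to watch, it is purely bookkeeping: one should make sure that the finiteness of $\overline{\mathcal K}_M$ really does hold under the stated hypothesis (it does, by the second bullet of Theorem~\ref{canonical classes}, which only requires $\kappa(M)\geq 0$), and that $\overline{\mathcal K}_M$ is genuinely nonempty so that ``finitely many nonzero characteristic classes'' is not vacuous—this follows from $M$ admitting a symplectic structure, which forces $\mathcal K_M\neq\emptyset$. Beyond that, no computation is needed.
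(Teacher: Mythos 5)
Your proof is correct and is essentially the paper's own argument: the paper's proof is a one-line citation of Theorem~\ref{canonical classes} together with the second bullet of Theorem~\ref{a}, and your write-up simply makes explicit the details (invariance, finiteness from $\kappa\geq 0$, and nonvanishing via the symplectic Calabi-Yau characterization) that the paper leaves implicit.
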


\begin{proof} Since it is assumed that $A(\Gamma)$ is infinite, the conclusion
follows directly from Theorem~\ref{canonical classes} and the
second statement of  Theorem \ref{a}.
\end{proof}

\subsubsection{Proof of  Theorem \ref{main} }
\begin{proof} It follows from Propositions \ref{-infty}, \ref{ab}, \ref{odd}.
\end{proof}

\section{Symplectic Calabi-Yau surfaces}

In this section we will focus on symplectic CY surfaces.
Specifically we will provide evidences for Conjecture
\ref{kahler} by showing that

1.  for any  K\"ahler CY surface, $D$ is of finite index.

2. for any known non-K\"ahler CY surface, if $A(\Gamma)$ is
infinite, then $D$ is of infinite index.

\subsection{Homological classification}
A symplectic Calabi-Yau surface is a minimal manifold with $\kappa=0$.

There is a homological classification of symplectic CY surfaces in \cite{L4} and \cite{B}.

\begin{thm}
A symplectic CY surface is a $\Z-$homology K3 surface, a $\Z-$homology
Enriques surface or a $\mathbb Q-$homology $T^2-$bundle over $T^2$.
\end{thm}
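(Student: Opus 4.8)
The plan is to read off the integral homology of $M$ from two inputs: the numerical identity forced by the vanishing of $K_\omega$, and gauge‑theoretic bounds on the Betti numbers of minimal symplectic $4$–manifolds with torsion canonical class. Since $M$ is a symplectic Calabi–Yau surface, $K_\omega$ is torsion, so $K_\omega\cdot K_\omega=0$. As recalled above, $K_\omega$ is characteristic with norm $2\chi(M)+3\sigma(\Gamma_M)$, so I would first record the key relation
\[
2\chi(M)+3\sigma(M)=0 .
\]
Moreover, by the result quoted in the introduction (from \cite{L06}), $\kappa(M)=0$ together with the CY hypothesis forces $\Gamma_M$ to be even. Thus from the outset $\Lambda_M$ carries an even symmetric unimodular form with $\sigma=-\tfrac23\chi$, and the whole problem is reduced to pinning down $b_1$, $b^+$, $b^-$ and the torsion of $H_1$.

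The main step — and the hard part — is the bound on the Betti numbers. Here I would invoke Taubes' nonvanishing theorem (\cite{T94}, \cite{T96}): for a symplectic $4$–manifold the Seiberg–Witten invariant of the canonical spin$^c$ structure is $\pm1$, and minimality together with the Taubes constraints forces $\pm K_\omega$ to be the only basic classes; since $K_\omega$ is torsion these agree rationally. The canonical moduli space has formal dimension $\tfrac14\big(K_\omega^2-2\chi-3\sigma\big)=0$. Exploiting the charge–conjugation symmetry of this single (essentially spin) basic class — through the reducible‑solution and wall‑crossing analysis in the $b^+=1$ chamber, and through the $\mathit{Pin}(2)$–refined invariant for larger $b^+$, exactly as developed in \cite{L4} and \cite{B} — one obtains $b^+\le 3$, $b_1\le 4$, and the stronger fact that $\chi(M)=0$ whenever $b_1>0$ (this last point is genuinely gauge‑theoretic and is what excludes numerically admissible but geometrically impossible triples such as $(b_1,b^+,b^-)=(1,3,15)$). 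Combining these bounds with $2\chi+3\sigma=0$ then restricts the triple to
\[
(b_1,b^+,b^-)\in\{(0,1,9),\ (0,3,19),\ (2,1,1),\ (3,2,2),\ (4,3,3)\}.
\]

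With the Betti numbers in hand I would finish by cases. If $b_1=0$, then $H_1(M;\mathbb Z)$ is torsion and $\Lambda_M$ is an even indefinite unimodular lattice, hence determined by its rank and signature via Theorem~\ref{classification}: rank $22$, signature $-16$ is the K3 lattice, and rank $10$, signature $-8$ is the Enriques lattice. Determining the torsion of $H_1$ (trivial in the first case, $\mathbb Z/2$ in the second) then identifies $M$ as a $\mathbb Z$–homology K3 surface or a $\mathbb Z$–homology Enriques surface. If $b_1>0$, then $\chi=\sigma=0$, so $b^+=b^-$ and $b_2=2b_1-2$ with $b_1\in\{2,3,4\}$; this is precisely the rational homology of a $T^2$–bundle over $T^2$, giving the $\mathbb Q$–homology $T^2$–bundle over $T^2$ alternative.

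The principal obstacle is the Betti–number bound: establishing $b^+\le3$, $b_1\le4$, and the vanishing of $\chi$ when $b_1>0$ is the substantive content, and is exactly what the Seiberg–Witten arguments of \cite{L4} and \cite{B} supply; I would rely on those rather than reprove them. A secondary subtlety is upgrading the conclusion from rational to integral homology in the $b_1=0$ cases, which requires controlling the torsion of $H_1$ in order to distinguish the homology K3 type from the homology Enriques type, and interpreting the $b_1>0$ case as a $\mathbb Q$–homology torus bundle over a torus.
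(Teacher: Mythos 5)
Your proposal is correct and is essentially the paper's approach: the paper offers no proof of this theorem at all, quoting it directly from \cite{L4} and \cite{B}, and you defer exactly the same substantive gauge-theoretic input (the bounds $b^+\le 3$ and $b_1\le 4$, the dichotomy that $b_1>0$ forces $\chi=0$, and the determination of the torsion of $H_1$ in the $b_1=0$ cases) to those same references, while the elementary bookkeeping you supply ($K_\omega$ torsion gives $2\chi+3\sigma=0$ and an even $\Gamma_M$, whence the five triples of the paper's table) is carried out correctly. One concrete slip in an aside: the triple $(b_1,b^+,b^-)=(1,3,15)$ is not ``numerically admissible,'' since an even indefinite unimodular form is $pU\oplus qE$ and hence has signature divisible by $8$, whereas here $\sigma=-12$ (it also violates the parity condition that $b^+-b_1$ be odd on an almost complex $4$-manifold). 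A triple that genuinely requires the gauge-theoretic dichotomy to exclude is, e.g., $(1,2,10)$: it has $\sigma=-8$, satisfies $2\chi+3\sigma=0$, $b^+\le 3$, $b_1\le 4$, and carries the even form $pU\oplus qE$ with $p=2$, $q=-1$, yet has $b_1>0$ and $\chi=12\neq 0$.
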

The following table list possible homological invariants of
symplectic CY surfaces \cite{L4}:

$$\begin{tabular}{|c|c|c|c|c|c|}
\hline
$b_1$ & $b_2$ & $b^+$ & $\chi$ & $\sigma$ & known manifolds\\
\hline
0 & 22 & 3 & 24 & -16 & K3\\
\hline
0 & 10 & 1 & 12 & -8 & Enriques surface\\
\hline
4 & 6 & 3 & 0 & 0 & 4-torus\\
\hline
3 & 4 & 2 & 0 & 0 & $T^2-$bundles over $T^2$\\
\hline
2 & 2 & 1 & 0 & 0 & $T^2-$bundles over $T^2$\\
\hline
\end{tabular}
$$

It is also speculated that in fact a symplectic CY surface is
actually the K3 surface, Enrique surface or a $T^2-$bundle over
$T^2$.

\subsection{K\"ahler CY surfaces}

\begin{prop}\label{kahler'}
$D(M)$ is of  index at most $4$ if $M$ is a K\"ahler CY surface.

\end{prop}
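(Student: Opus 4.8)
The plan is to handle the three Kähler Calabi-Yau surfaces with infinite $A(\Gamma)$ individually, since by the homological classification there are exactly three such surfaces: the K3 surface, the Enriques surface, and $T^4$. For each I would identify the relevant lattice $\Gamma$ and compute (or cite) the image $D(M)$ inside $A(\Gamma)$, then bound the index by $4$. The guiding principle is that for these geometrically rigid surfaces the diffeomorphism group realizes everything in $A(\Gamma)$ except possibly for orientation and spinor-norm type constraints, which cut out an index-$2$ or index-$4$ subgroup.

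First I would treat the K3 surface, where $\Gamma = 3U \oplus 2(-E)$ is even, unimodular, of signature $-16$. Here the classical result (Donaldson, together with the work of Borcea, Matumoto, and others on the realization of isometries by diffeomorphisms) shows that every isometry of the K3 lattice preserving the sign of the period is realized by a diffeomorphism; the only obstruction is that orientation-preserving diffeomorphisms act on $H^2$ preserving a choice of orientation of the maximal positive-definite subspace, so $D(M)$ has index $2$ in $A(\Gamma)$. For $T^4$, with $\Gamma = 3U$, I would use the fact that $\mathrm{SL}(4,\Z)$ acts through linear diffeomorphisms on $H^2(T^4;\Z) = \wedge^2 H^1$, and analyze which elements of $A(3U)\cong O(3,3;\Z)$ arise; again the index should be controlled by orientation and the even/odd distinction of the real and imaginary parts of a holomorphic $2$-form, giving index at most $4$. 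The Enriques surface, being a free $\Z_2$-quotient of K3, has $\Gamma = U \oplus (-E)$, and its mapping class action on $H^2$ has been computed in the algebraic-geometry literature (the automorphism group and the monodromy of the Enriques lattice are classical), from which the bound follows.

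The key steps, in order, are: (1) invoke the homological classification table to reduce to exactly these three manifolds; (2) for each, write down $\Gamma$ and the group $A(\Gamma)$ explicitly; (3) cite or reprove the relevant realization theorem identifying which isometries are geometric; (4) read off that the cokernel $A(\Gamma)/D(M)$ is a group of order at most $4$. Since the three cases are independent, the final index bound $4$ is simply the maximum over them.

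The main obstacle will be establishing the realization results cleanly rather than merely the abstract structure of $A(\Gamma)$. For K3 the statement that $D(M)$ is exactly the index-$2$ orientation-preserving subgroup rests on deep gauge-theoretic input (Donaldson's constraints forcing the sign on $H^{2,+}$) together with geometric constructions realizing the full orthogonal group on the orthogonal complement; I would need to quote these carefully. For $T^4$ the delicate point is that not every lattice isometry of $3U$ is obviously induced by a self-diffeomorphism of $T^4$, so I must argue either via the linear $\mathrm{GL}(4,\Z)$-action supplemented by a finite set of extra diffeomorphisms, or by directly citing a computation of the mapping class group's action on cohomology; pinning down precisely which coset representatives are realized, and verifying the count stays within $4$, is where the genuine work lies.
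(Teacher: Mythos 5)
Your overall strategy (case-by-case realization results: Donaldson for K3, the literature for the Enriques surface, and the $\Lambda^2$-action of $SL(4,\Z)$ for $T^4$) is essentially the paper's strategy, but there are two genuine gaps. First, your reduction step is flawed as stated: the proposition concerns \emph{all} K\"ahler CY surfaces, and by the Kodaira classification these are the K3 surface, the Enriques surface, $T^4$, \emph{and the hyperelliptic surfaces}. Restricting to the surfaces with infinite $A(\Gamma)$ does not dispose of the rest --- a finite group can perfectly well have subgroups of index larger than $4$ --- so you must say why the hyperelliptic case is harmless. It is, but only because there $\Gamma=U$, whence $A(\Gamma)\cong\Z_2\oplus\Z_2$ has order $4$ and every subgroup automatically has index at most $4$; this sentence is missing from your argument.

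Second, and more seriously, for $T^4$ you describe what must be done but do not do it, and the heuristic you offer in its place (``the index should be controlled by orientation and the even/odd distinction of the real and imaginary parts of a holomorphic $2$-form'') is not a lattice-theoretic argument; a priori the image $\Lambda^2(SL(4,\Z))$ could have infinite index in $A(3U)$ for all your sketch shows. The paper closes exactly this gap by a concrete computation: it takes Wall's generating set $n_i,s_i,p_{ij},\alpha_{ij}$ of $A(3U)$, exhibits explicit matrices in $SL(4,\Z)$ whose $\Lambda^2$ equal $n_in_j$, $s_is_j$, $p_{ij}n_i$, $\alpha_{ij}$ (and their conjugates under the $s_i$), and then verifies, using an explicit list of relations among the generators, that the subgroup $N$ generated by these elements is normal of index $4$ in $A(3U)$. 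Since $N\subset D(T^4)$, this yields the bound (the paper further shows $n_3,s_3,n_3s_3$ are not realized, so the index is exactly $4$, but that refinement is not needed for the proposition as stated). Your K3 and Enriques cases are acceptable as citations (Donaldson: index $2$; L\"onne: $D=A$ for the Enriques surface), though for the Enriques surface the precise reference is L\"onne's theorem on diffeomorphism groups of elliptic surfaces rather than the classical monodromy computations you allude to.
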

\begin{proof} According to the Kodaira classification, a K\"ahler CY surface is
either a hyperelliptic surface, the Enrique surface, the K3 surface, or $T^4$.

For hyperelliptic surfaces, $\Gamma=U$, so $A(\Gamma)$ is the order $4$ group $\Z_2\oplus\Z_2$.
For the K3 surface, it was shown by Donaldson in \cite{Do90} that  $D$ is of index 2.
 For the
Enriques surface it was shown by L\"{o}nne   in \cite{lo} that $D=A$. It remains to deal with $T^4$. Our claim is that  that $[A(3U):D(T^4)]=4$.

Use the standard model $\R^4/\Z^4$ for $T^4$, with coordinates
$t_1,\dots,t_4$. Then $H^1$ is generated by $\{ dt_1,\dots dt_4
\}$, and $H^2$ is generated by $\{ dt_i \wedge dt_j,\  i<j \}$.
Let
\begin{eqnarray*}
x_1&=dt_1\wedge dt_2, \quad y_1=dt_3\wedge dt_4, \\
x_2&=dt_1\wedge dt_3, \quad y_2=dt_4\wedge dt_2, \\
x_3&=dt_1\wedge dt_4, \quad y_3=dt_2\wedge dt_3.
\end{eqnarray*}
 Then the nonzero relations are $x_i\cdot y_i
=y_i \cdot x_i=1$, and $\Gamma =3U$.

In the following, we define certain automorphisms of $\Gamma$ by
listing the non-invariant terms
$$\begin{array}{rl}
n_i: & x_i\mapsto -x_i, \ y_i \mapsto -y_i\\
s_i: & x_i\mapsto y_i, \ y_i \mapsto x_i\\
p_{ij}(=p_{ji}): & x_i\mapsto x_j, \ y_i \mapsto y_j,\ x_j\mapsto x_i, \ y_j \mapsto y_i\\
\alpha_{ij}: & x_i\mapsto x_i+x_j, \ y_j \mapsto y_j-y_i
\end{array}$$

Wall showed in \cite{wall-ii} that $A(3U)$ is generated by $n_i, s_i, p_{ij}$ and $\alpha_{ij}$.

If $A=(a_{ij})\in SL(4,\Z)$ which induces automorphism
$A(dt_j)=\sum a_{ij}dt_i$ of $H^1$, then
$$ \Lambda^2 A(dt_i \wedge dt_j) =\sum_{k,l} a_{ki}a_{lj} dt_k \wedge
dt_l =\sum_{k<l} (a_{ki}a_{lj}-a_{li}a_{kj}) dt_k \wedge dt_l. $$
Let $C=\Lambda^2 A=(p_{kl,ij}) \in A(3U)$. Then we have relations
$$P_{kl,ij}: p_{kl,ij}=a_{ki}a_{lj}-a_{kj}a_{li}.$$
 Note that
$p_{kl,ij}=-p_{lk,ij}=-p_{kl,ji}$.
If we choose
$$A=
\begin{pmatrix}
-1 & 0 & 0 & 0\\
0 & 1 & 0 & 0\\
0 & 0 & 1 & 0\\
0 & 0 & 0 & -1\\
\end{pmatrix},
\begin{pmatrix}
0 & 0 & 0 & -1\\
0 & 0 & 1 & 0\\
0 & -1 & 0 & 0\\
1 & 0 & 0 & 0\\
\end{pmatrix},
\begin{pmatrix}
1 & 0 & 0 & 0\\
0 & 0 & 1 & 0\\
0 & -1 & 0 & 0\\
0 & 0 & 0 & 1\\
\end{pmatrix},
\begin{pmatrix}
1 & 0 & 0 & 0\\
0 & 1 & 0 & 0\\
0 & 1 & 1 & 0\\
0 & 0 & 0 & 1\\
\end{pmatrix},$$
the corresponding $\Lambda^2A$ are  $n_1n_2, s_1s_2,
p_{12}n_1(=n_2p_{12})$  and $\alpha_{12}$ respectively. By
symmetry, $n_in_j, s_is_j, p_{ij}n_i, \alpha_{ij},
s_i\alpha_{ij}s_i, s_j\alpha_{ij}s_j$ are also in the image of
$\Lambda^2$. Let $N$ be the subgroup of $A(3U)$ generated by these
elements. So $N\subset D(T^4)$.

Using the following relations ($i, j, k$ distinct)
$$n_i^2=s_i^2=p_{ij}^2=1,\ s_is_j=s_js_i,\ n_in_j=n_jn_i,\ p_{ik}p_{ij}=p_{jk}p_{ik},$$
$$n_is_t=s_tn_i,\ n_ip_{ij}=p_{ij}n_j,\ n_kp_{ij}=p_{ij}n_k,\ s_ip_{ij}=p_{ij}s_j,\ s_kp_{ij}=p_{ij}s_k,$$
$$n_i\alpha_{ij}=\alpha_{ij}^{-1}n_i,\ n_j\alpha_{ij}=\alpha_{ij}^{-1}n_j,\ n_k\alpha_{ij}=\alpha_{ij}n_k,$$
$$s_k\alpha_{ij}=\alpha_{ij}s_k,\ p_{ij}\alpha_{ij}p_{ij}=\alpha_{ji},\ p_{ik}\alpha_{ij}p_{ik}=\alpha_{kj},$$
we know that $N$ is a normal subgroup of $A(3U)$ of index 4.
Hence $[A(3U):D(T^4)]\leq 4$.

To finish our proof, we only need to show that the automorphisms $n_3,
s_3, n_3s_3$,  which give different cosets of $N$, are not in
$D(T^4)$. They have the same coefficients in $C(x_1), C(y_1),
C(x_2), C(y_2)$, which are
\begin{align}\label{cancel}
p_{kl,ij}=\delta_{ki}\delta_{lj}-\delta_{kj}\delta_{li},\ j\neq
\tau (i),\ \tau=(14)(23)\in S_4.
\end{align}
We want to use them and relations $P_{kl,ij}$ to give constraints
on $a_{ij}$ and show that $C(x_3), C(y_3)$ are also determined.
The linear combination $a_{sj}P_{kl,ij}-a_{kj}P_{sl,ij}$ gives new
relation
$$R_{k,l,s;i,j}: a_{lj}p_{ks,ij}-a_{sj}p_{kl,ij}+a_{kj}p_{sl,ij}=0.$$
\eqref{cancel} implies $a_{lj}=0$ if $l\neq j, \tau(j)$.
$P_{ij,ij}, j\neq \tau(i)$ becomes $a_{ii}a_{jj}=1$. So $a_{ii}=a_{11}=\pm 1$ for any $i$.
Now $P_{li,\tau(l)\tau(i)}$ implies $a_{i\tau(i)}=0$.
So $(a_{ij})=\pm I$. This shows that $C=I$ and $n_3, s_3, n_3s_3$ are not in the image of $\Lambda^2$.

\end{proof}

The following two remarks provide some Lie group insights in the K3 case and the $T^4$ case respectively. 

\begin{rem}\label{spinor norm}
If $\Gamma$ is indefinite, Aut$(\Gamma\otimes\R)$ has two components and the identity  component  Aut$^0(\Gamma\otimes\R)$ consists of   the automorphisms of spinor norm 1
(\cite{FM94} p.397). For the K3 surface, $D$ is exactly 
the intersection of $A(\Gamma)$ with Aut$^0(\Gamma\otimes\R)$.
\end{rem}

\begin{rem}\label{lattice} Here we explain that $D(T^4)$ has
finite index in $A(3U)$ via algebraic group theory, which was
communicated to us by S. Adams. We refer to the book of
Platonov-Rapinchuk \cite{PR} for relevant definitions and
theorems. Let $\Lambda^2: SL(4,\C)\to O(3U,\C)$ be the
$\mathbb{Q}$-morphism of algebraic groups defined as in the proof
of Proposition~\ref{kahler'}: $A=(a_{ij}) \mapsto
\Lambda^2A=(p_{kl,ij})$, where
$p_{kl,ij}=a_{ki}a_{lj}-a_{kj}a_{li}$. It is easy to see that the
kernel consists of $\pm$ id (Section 2 in \cite{LT}). As
$SL(4,\C)$ and $O(3U,\C)$ have same dimension, the image of
$\Lambda^2$ contains a neighborhood of the identity of $O(3U,\C)$.
The connectedness of $SL(4,\C)$ then implies that image of
$\Lambda^2$ is $O^0(3U,\C)$, the identity component of $O(3U,\C)$.
By Theorem 4.13 in \cite{PR}, p. 213 (or Theorem 4.14, p. 220),
$SL(4,\Z)$ is a lattice in $SL(4,\R)$ (i.e., a discrete subgroup
such that $SL(4,\R)/SL(4,\Z)$ has finite invariant volume).
Theorem 4.1 on page 204 of \cite{PR} then implies that
$\Lambda^2(SL(4,\Z))$ is an arithmetic subgroup of $O(3U,\C)$,
i.e., $\Lambda^2(SL(4,\Z))\cap A(3U) =\Lambda^2(SL(4,\Z))$ has
finite index in $A(3U)$.
\end{rem}

\subsection{Known non-K\"ahler CY surfaces}
The only known examples of non-K\"ahler CY are $T^2-$bundles over $T^2$.

If further,  $A(\Gamma)$ is infinite, then according to the table
above, $M$ is a $T^2-$bundles over $T^2$ with  $b^+=b^-=2$.
Such a manifold is a so-called Kodaira-Thurston manifold.

As a $T^2$ bundle over $T^2$, $M$ is
described by a triple
$$\{
\begin{pmatrix}
1 &\lambda \\
0 & 1
\end{pmatrix},
I, (0,0)\}, $$ where the first two matrices in $SL(2, \Z)$ are monodromies and the
third term denotes Euler numbers.

\begin{prop}\label{KT}
If $M$ is a $T^2-$bundle over $T^2$ with $b^+=2$, then $D(M)$ is  of infinite index.
\end{prop}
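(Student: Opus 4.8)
The plan is to reduce the statement to the infinite-index criterion already proved in the second bullet of Theorem~\ref{a}. That criterion says: once $A(\Gamma)$ is infinite (equivalently $\Gamma$ is indefinite of rank $\geq 3$), any subgroup $D$ that preserves a finite set of nonzero characteristic classes must have infinite index. So the entire task is to produce, for a $T^2$-bundle $M$ over $T^2$ with $b^+=2$, a finite $D(M)$-invariant set of \emph{nonzero} characteristic classes in $\Gamma$.

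First I would record the homological data. From the classification table, a symplectic CY $T^2$-bundle over $T^2$ with $b^+=2$ has $b^+=b^-=2$, so $\Gamma$ has rank $4$ and signature $0$; by Theorem~\ref{a} its automorphism group $A(\Gamma)$ is infinite. The natural candidate for the invariant set is the image $\overline{\mathcal K}_M$ of the symplectic canonical classes, since Theorem~\ref{canonical classes} tells us it is preserved by $D(M)$ and, because $\kappa(M)=0$, it is a \emph{finite} set. The one subtlety is that for a symplectic CY surface $\overline{\mathcal K}_M$ contains $0$ (third bullet of Theorem~\ref{canonical classes}), so $\overline{\mathcal K}_M$ on its own is \emph{not} a set of nonzero characteristic classes and the criterion does not apply directly. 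This is exactly the gap that distinguishes the present situation from the odd-type case handled in Proposition~\ref{odd}, where non-minimality forced the canonical classes to be nonzero.

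To close this gap I would use the explicit structure of $M$ as a Kodaira--Thurston manifold described by the triple with monodromy $\bigl(\begin{smallmatrix}1&\lambda\\0&1\end{smallmatrix}\bigr)$. The idea is that the bundle structure distinguishes a preferred cohomology class, namely the fiber class $[T^2_{\text{fiber}}]$ (or its Poincar\'e dual), a nonzero isotropic class of square $0$ canonically attached to $M$. I would argue that this fiber class, or the finite set of such distinguished square-zero classes coming from the fibration, is preserved by $D(M)$: every orientation-preserving diffeomorphism must carry the (essentially unique) fibration structure to a compatible one, so it permutes this finite collection of nonzero classes. Feeding that finite nonzero invariant set into the second bullet of Theorem~\ref{a} then yields infinite index.

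The main obstacle will be establishing the diffeomorphism-invariance of the fiber class rigorously: a priori a diffeomorphism need not preserve the given fibration, so I must show either that the square-zero spherical/toroidal class is characterized purely homologically (e.g.\ as the unique primitive isotropic class realized by a symplectically embedded torus, or via the structure of $\overline{\mathcal K}_M$ together with the intersection form) or that the Kodaira--Thurston manifold admits an essentially unique $T^2$-fibration up to diffeomorphism. An appealing alternative, avoiding the fibration entirely, is to work with $\overline{\mathcal K}_M$ more cleverly: although $0\in\overline{\mathcal K}_M$, the set $\overline{\mathcal K}_M$ is finite and $D(M)$-invariant, so $D(M)$ preserves the finite-dimensional subspace it spans and hence its orthogonal complement; if one can exhibit inside this span or its complement a nonzero characteristic class whose (finite) $D(M)$-orbit stays nonzero, the criterion applies. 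I expect the cleanest route to be the fibration argument, with the uniqueness/invariance of the fiber class being the single point requiring genuine care.
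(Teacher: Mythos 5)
Your reduction to the second bullet of Theorem~\ref{a} runs into two concrete problems, and the first one is fatal to the fibration route you propose. For the Kodaira--Thurston manifold, the fiber of the given $T^2$-fibration (the $(z,t)$-torus, in the coordinates the paper uses) is \emph{rationally null-homologous}: every class in $H^2(M;\mathbb{R})$ is represented by an invariant form in the span of $dx\wedge dt$, $dy\wedge(dz-\lambda y\,dx)$, $dy\wedge dt$, $dz\wedge dx$, and each of these restricts to zero on the $(z,t)$-torus. So the ``fiber class'' you want to feed into the criterion is $0$ in $\Gamma_M$, not a nonzero class. Worse, the fibration is not essentially unique: $M$ also fibers over the $(x,t)$-torus with fiber the $(y,z)$-torus, and that fiber's class is the nonzero isotropic class $dx\wedge dt$. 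Since different $T^2$-fibrations of the same manifold have fiber classes that do not even agree up to sign (one is zero, one is not), there is no well-defined finite set of ``distinguished fiber classes'' for $D(M)$ to permute, and the invariance claim you flag as ``the single point requiring genuine care'' cannot be established in the form you want. Your fallback via $\overline{\mathcal K}_M$ also gives nothing: for a symplectic CY surface $\overline{\mathcal K}_M$ contains $0$ (and here the canonical class is trivial in $\Gamma_M$), so its span carries no usable information. A second, smaller issue: even if you had a finite $D(M)$-invariant set of nonzero primitive classes, the criterion in Theorem~\ref{a} is stated for \emph{characteristic} classes; $\Gamma=2U$ is even, so characteristic vectors are exactly the vectors divisible by $2$, and a primitive class is never characteristic. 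You would have to rerun the coset-counting argument directly, using Proposition~\ref{transitive}(1) to see that the $A(2U)$-orbit of a primitive isotropic ordinary vector is infinite --- fixable, but missing.

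The paper's proof avoids both difficulties by using an invariant that is natural for purely algebraic-topological reasons: the image of the cup product $H^1\times H^1\stackrel{\wedge}{\longrightarrow}H^2$. Since $H^1=\langle dx,dy,dt\rangle$ and $dx\wedge dy$ is exact, this image is the $2$-dimensional full isotropic subspace $\langle dx\wedge dt,\ dy\wedge dt\rangle$, and it is preserved by $D(M)$ automatically (naturality of cup product under any diffeomorphism, indeed any homotopy equivalence --- no statement about fibrations is needed). Then Lemma~\ref{transitive'}, rather than the characteristic-class criterion, finishes the argument: $A(2U)$ acts transitively on the infinite set $\mathcal{U}(2U)$ of full isotropic $2$-planes, so the stabilizer of this subspace, which contains $D(M)$, has infinite index. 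If you want to salvage your approach, the correct ``distinguished object'' is this cup-product image, not a fiber class.
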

\begin{proof} It is convenient to  use
the following description of $M$. 
Let $G=Nil^3\times E^1$ act on $X=\mathbb{R}^4$ from left as
$$(x_0,y_0,z_0,t_0)(x,y,z,t)=(x_0+x,y_0+y,z_0+z+\lambda x_0y,t_0+t)$$
and $L$ is a discrete subgroup of $G$ generated by
$$(1,0,0,0), (0,1,0,0),(0,0,1,0),(0,0,0,1).$$
$M$ is then the quotient $L\backslash X$. 
$G-$invariant $1-$forms are generated by
$$dx,dy,dz-\lambda ydx,dt,$$
and 
$H^1=<dx, dy,dt>$.
$H^2$ is generated by
$$F_1=dx\wedge dt,\ F_2=dy\wedge (dz-\lambda ydx), \ F_3= dy\wedge
dt,\ F_4=dz\wedge dx$$  with $F_1\cdot F_2=F_3\cdot F_4=1$. Hence
$\Gamma=2U$. 

Observe that Im$(H^1\times H^1\stackrel{\wedge}{\longrightarrow}H^2)=<F_1, F_3>$ is 
$2-$dimensional isotropic subspace of $H^2$ invariant under $D(M)$.
  By
Lemma \ref{transitive'}, $D(M)$ has infinite index.

\end{proof}

We can also show that $D(M)$ is infinite. Let $\alpha=(x, y)$. For
any $T\in GL(2,\mathbb{Z})$, there exists a matrix $B\in M_{2\times 2}(\mathbb{Q})$ such that the map $\phi_T:\mathbb{R}^4\rightarrow \mathbb{R}^4$ defined as
$$\phi_T (\alpha, z,t)=(\alpha T, det(T)z+\alpha B\alpha^t, t)$$
preserves $L$.
So $D(M)$ is infinite for any $\lambda$.


\ms \ni{\it Acknowledgment}. The authors appreciate useful
discussions with Scot Adams. The research for the first named
author is partially supported by NSFC Grant 10990013. The research
for the second named author is partially supported by NCTS
postdoctoral fellowship. The research for the third named author
is partially supported by NSF.

\end{document}